\newcommand\Vol{{\operatorname{Vol}}}
\newcommand\R{{\mathbf{R}}}
\newcommand\C{{\mathbf{C}}}
\renewcommand\P{{\mathbf{P}}}
\newcommand\E{{\mathbf{E}}}
\newcommand\Z{{\mathbf{Z}}}
\newcommand\F{{\mathbf{F}}}
\renewcommand\mod{\ \operatorname{mod}\ }
\newcommand\ep{\epsilon}
\theoremstyle{plain}
  \newtheorem{theorem}[subsection]{Theorem}
  \newtheorem{lemma}[subsection]{Lemma}
  \newtheorem{corollary}[subsection]{Corollary}
\theoremstyle{remark}
  \newtheorem{remark}[subsection]{\bf Remark}
  \newtheorem{properties}[subsection]{\bf Property}
\theoremstyle{definition}
  \newtheorem{definition}[subsection]{Definition}
\begin{document}

\title[A new approach to an old problem of Erd\H{o}s and Moser]{A new approach to an old problem of Erd\H{os} and Moser}

\author{Hoi H. Nguyen}
\address{Department of Mathematics, University of Pennsylvania, Philadelphia, PA 19104}
\email{hoing@math.upenn.edu}
\subjclass{11B25}

\maketitle

\begin{abstract}
Let $\eta_i, i=1,\dots, n$ be iid Bernoulli random variables, taking values $\pm 1$ with probability $\frac{1}{2}$. Given
a multiset $V$ of $n$ elements  $v_1, \dots, v_n$ of an additive group $G$,  we define the \emph{concentration probability} of $V$ as

$$\rho(V) := \sup_{v\in G} \P( \eta _1 v_1 + \dots
\eta_n v_n =v). $$

An old result of Erd\H{o}s and Moser asserts that if $v_i $ are distinct real numbers then $\rho(V)$ is $O(n^{-\frac{3}{2}}\log n)$. This bound was then refined by S\'ark\"ozy and Szemer\'edi to $O(n^{-\frac{3}{2}})$, which is sharp up to a constant factor. The ultimate result dues to Stanley who used tools from algebraic geometry to give a complete description for sets having optimal concentration probability; the result now becomes classic in algebraic combinatorics.

In this paper, we will prove that the optimal sets from Stanley's work are stable. More importantly, our result gives an almost complete description for sets having large concentration probability.

\end{abstract}

\section{Introduction}\label{section:introduction}

Let $\eta_i, i=1,\dots, n$ be iid Bernoulli random variables, taking values $\pm 1$ with probability $\frac{1}{2} $. Given
a multiset $V$ of $n$ elements $v_1, \dots, v_n$ of an additive group $G$, we define the random
walk $S$ with steps in $V$ to be the random variable $S:=\sum_{i=1}^n{\eta_i v_i}$. The \emph{concentration probability} of $V$ is defined to be

$$\rho(V) := \sup_{v \in G} \P(S=v). $$

Motivated by their study of random polynomials, in the 1940s Littlewood and
Offord \cite{LO} raised the question of bounding $\rho(V)$ where $v_i$ are real numbers. They
showed that $\rho(V)=O(n^{-\frac{1}{2}}\log n)$.
Shortly after Littlewood-Offord paper, Erd\H{o}s \cite{ELO}
gave a beautiful combinatorial proof of the
refinement

\begin{equation} \label{eqn:Erdos} \rho(V)\le \frac{ \binom{n}{n/2}}{2^n } =O(n^{-\frac{1}{2}}). \end{equation}

Erd\H os'  result is sharp, as demonstrated by  $V=\{1,\dots,1\}$.

The Littlewood-Offord and Erd\H{o}s results are
classic  in combinatorics and have  generated an impressive wave of research, in
particular from the early 1960s to the late 1980s.

One direction of research was to generalize Erd\H os' result  to
other groups. For example, in  1966 and  1970, Kleitman extended Erd\H{o}s' result to
complex numbers and normed vectors, respectively. 

Another direction was  motivated by the observation that \eqref{eqn:Erdos}  can be improved significantly
under additional assumptions on $V$. The first such result was discovered by  Erd\H{o}s and Moser \cite{EM},
who showed that if $v_i$ are distinct real numbers then $\rho(V) = O(n^{-\frac{3}{2}} \log n)$.
They conjectured that the logarithmic term is not necessary. S\'ark\"ozy and Szemer\'edi claimed this conjecture in the 1960s.

\begin{theorem}[S\'ark\"ozy-Szemer\'edi's theorem, \cite{SS}] \label{theorem:SS} Let $V$ be a set of $n$ distinct integers, then

$$\rho(V)=O(n^{-\frac{3}{2}}).$$ \end{theorem}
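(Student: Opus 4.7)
The plan is a Fourier-analytic argument in the spirit of Hal\'asz. Since $S=\sum_i\eta_iv_i$ is integer-valued with characteristic function $\phi(t)=\prod_i\cos(tv_i)$, Fourier inversion gives (after a change of variables)
\[
\rho(V)\le\int_{-1/2}^{1/2}\prod_{i=1}^n|\cos(\pi\xi v_i)|\, d\xi.
\]
Applying the elementary pointwise inequality $|\cos(\pi t)|\le\exp(-c\|t\|^2)$, where $\|t\|$ denotes the distance from $t$ to the nearest integer and $c>0$ is absolute, reduces the task to bounding
\[
I(V):=\int_{-1/2}^{1/2}\exp\Bigl(-c\sum_{i=1}^n\|\xi v_i\|^2\Bigr)\, d\xi.
\]

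The main contribution is concentrated near $\xi=0$. On the central arc $|\xi|\le 1/(2\max_i|v_i|)$, each $\|\xi v_i\|$ equals $|\xi v_i|$, so the exponent becomes $c\xi^2\sum_iv_i^2$; since the $v_i$ are $n$ distinct integers, $\sum_iv_i^2\ge c_0 n^3$, and a Gaussian tail bound produces a contribution of $O(n^{-3/2})$, which is exactly the desired order. For the complementary range, I would use a rational-approximation argument: by Dirichlet's theorem, any such $\xi$ can be written as $p/q+\delta$ with $q\le\max_i|v_i|$ and $|\delta|\le 1/(q\max_i|v_i|)$; near $p/q$, the quantity $\|\xi v_i\|$ is controlled by $\|pv_i/q\|+v_i\delta$, and distinctness of the $v_i$ forces their residues modulo $q$ to be spread across $\Z/q\Z$ with at most $\lceil n/q\rceil$ per class. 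A counting argument then shows that for $q\ge 2$ at most a bounded fraction of the $v_i$ satisfy $\|pv_i/q\|\le 1/6$, so $\sum_i\|\xi v_i\|^2=\Omega(n)$ on this arc, and its contribution to $I(V)$ is $O(e^{-cn})$, negligible.

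The main obstacle is this uniform lower bound $\sum_i\|\xi v_i\|^2=\Omega(n)$ off the central arc: a naive dyadic argument loses a logarithmic factor, precisely the $\log n$ in Erd\H{o}s--Moser's original bound. A further technicality is that $V$ may contain extreme outliers (for example $v_n\gg v_{n-1}$), in which case $\max_i|v_i|$ is not comparable to $n$; the standard fix is to condition on $\eta_n$ and recurse, peeling off the outliers one at a time. A cleaner alternative bypasses the pointwise bound entirely by applying Jensen to Hal\'asz's moment identity
\[
\rho(V)^{2m}\le \frac{1}{2\pi}\int_{-\pi}^{\pi}\phi(t)^{2m}\, dt = 2^{-2mn}\,\#\bigl\{\epsilon\in\{\pm1\}^{2mn}:\textstyle\sum_{i,j}\epsilon_{ij}v_i=0\bigr\},
\]
bounding the number of vanishing signed sums by a local-CLT-type estimate for $\sum_iv_iT_i$, where $T_i$ is a sum of $2m$ Bernoullis, and optimizing the resulting inequality in $m$; this trades the logarithm for a constant factor and recovers the sharp $O(n^{-3/2})$ bound.
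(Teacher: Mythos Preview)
The central gap is in your off-central-arc analysis. The claim that ``distinctness of the $v_i$ forces their residues modulo $q$ to be spread across $\Z/q\Z$ with at most $\lceil n/q\rceil$ per class'' is simply false: distinct integers can all share a residue class (e.g.\ $v_i=qi$ puts all $n$ elements in class~$0$). Consequently the conclusion $\sum_i\|\xi v_i\|^2=\Omega(n)$ away from $\xi=0$ fails whenever $V$ has a large subset with a common divisor $d\ge2$: near $\xi=a/d$ the exponential is again close to~$1$, producing another peak of order $n^{-3/2}$ in the integral. One may of course dilate so that $\gcd(V)=1$, but that still allows all but one element to be even, and then $\xi=1/2$ is essentially as bad as $\xi=0$. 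This is precisely the obstruction that makes the $\log n$ in Erd\H{o}s--Moser hard to remove by a bare circle-method argument, and your Dirichlet/pigeonhole step does not overcome it.

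The paper does not prove Theorem~\ref{theorem:SS} directly (it is cited from~\cite{SS}); the bound is instead recovered as a byproduct of the stability theorem, cf.~\eqref{eqn:maximumvalue}. The route is quite different from yours: rather than bounding the minor arcs for an arbitrary distinct set, the paper first runs an inverse Littlewood--Offord argument (Theorem~\ref{theorem:weakstable}, via level sets, Cauchy--Davenport, and Freiman's theorem) to show that if $\rho(V)\gg n^{-3/2}$ then, after a suitable dilation in $\F_p$, almost all of $V$ lies in $[-Cn,Cn]$ and, after further preprocessing, is \emph{$c$-irreducible} (Property~\ref{prop:irreducible}): no $d\ge2$ divides more than a $c$-fraction of the elements. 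Only with this structural input does the paper carry out the minor-arc bound (Lemma~\ref{lemma:S2:step2}), and irreducibility is used exactly in the large-$\|\xi/p\|$ case to rule out the rational peaks that break your argument. In short, the paper earns the minor-arc estimate by first proving structure; your proposal assumes it for free.

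Your second sketch via Hal\'asz moments is in the right spirit---indeed closer to the original S\'ark\"ozy--Szemer\'edi argument---but as written it is only a signpost: ``local-CLT-type estimate'' hides the entire difficulty of bounding $\P\bigl(\sum_i v_iT_i=0\bigr)$ sharply for distinct $v_i$, and optimizing in $m$ does not by itself remove the logarithm without a further combinatorial idea.
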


Unfortunately, the result of S\'ark\"ozy and Szemer\'edi does not cover the most interesting question (also raised by Erd\H{o}s and Moser \cite{EM}): which sets have optimal concentration probability? This major problem remained open until the breakthrough of Stanley \cite{Stan}  in the early 1980s.

\begin{theorem}[Stanley's theorem] \label{theorem:Stan}
Let $n$ be odd and $V_0 :=\{- \lfloor \frac{n}{2} \rfloor , \dots, \lfloor \frac{n}{2} \rfloor \}$. Let $V$ be any set of $n$ distinct integers, then $$\rho(V) \le \rho(V_0).$$
\end{theorem}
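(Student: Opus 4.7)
The plan is to translate $\rho(V)$ into a polynomial-coefficient problem and attack the resulting extremal question on the signed Boolean lattice. Write $p_V(s) := |\{\eps \in \{\pm 1\}^n : \sum_i \eps_i v_i = s\}|$, so $\rho(V) = 2^{-n}\max_s p_V(s)$ and $p_V(s)$ is the $x^s$-coefficient of the Laurent polynomial $P_V(x) := \prod_{i=1}^n (x^{v_i}+x^{-v_i})$. Via the bijection $\eps \leftrightarrow A=\{i:\eps_i=+1\}$, $p_V(s)$ equals the number of $A \subseteq [n]$ with $\sum_{i\in A} v_i = (s+T)/2$, where $T=\sum_i v_i$. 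Stanley's theorem is therefore equivalent to: among all $V$ of $n$ distinct integers, $\max_s [x^s] P_V(x) \le \max_s [x^s] P_{V_0}(x)$.

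I would split this into two stages. In the \emph{vertical} stage, one establishes that the palindromic sequence $(p_V(s))_s$ is unimodal, forcing its maximum to be attained at the center by the symmetry $p_V(s) = p_V(-s)$. This can be attempted by equipping $\R^{\{\pm 1\}^n}$ with an $\mathfrak{sl}_2$-action graded by $\eps \mapsto \sum_i \eps_i v_i$, where the raising operator flips a single $-1$ into $+1$ with an appropriately chosen weight depending on the $v_i$. Distinctness of the $v_i$ ensures that each level set of the grading is an antichain in the coordinate-wise order on $\{\pm 1\}^n$, and the hard Lefschetz theorem applied to this module yields unimodality of the weight multiplicities. In the \emph{horizontal} stage, one then compares these central values across $V$: bounding $\max_s p_V(s) = p_V(s^*) \le p_{V_0}(0)$ reduces to an additive-combinatorial extremal problem, exploiting that the pairing symmetry $v \leftrightarrow -v$ present in $V_0$ generates the maximum possible number of sign patterns producing total cancellation.

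The main obstacle lies in the vertical stage: verifying the hard-Lefschetz property that a chosen degree-one operator is injective on the negative half of the grading. Stanley's original argument derives this from ampleness of a line bundle on a projective toric variety, and Proctor later packaged it purely in terms of $\mathfrak{sl}_2$-representations; any combinatorial substitute must select a raising operator tuned to the arithmetic of the $v_i$ and prove its injectivity via either a Gaussian-binomial identity or a weight-respecting chain matching on $\{\pm 1\}^n$. The horizontal stage, though delicate, is a more tractable comparison once the peak has been pinned at the center; its finer quantitative form is exactly the stability statement announced in the abstract and is, I expect, where the paper's new approach departs from Stanley's algebraic-geometric route.
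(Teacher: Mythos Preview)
The paper does not prove Theorem~\ref{theorem:Stan}; it is quoted as a known result with references to Stanley~\cite{Stan} and Proctor~\cite{P}. The paper's own work is Theorem~\ref{theorem:stable}, proved by Fourier analysis in $\F_p$: one bounds $\rho$ through the level sets of $\xi\mapsto\sum_i\|v_i\xi/p\|^2$, uses Freiman-type structure to show that most of $V$ lies (after dilation) in a short interval, and then estimates the resulting exponential sum directly. This yields only the asymptotic bound $\rho(V)\le(\sqrt{24/\pi}+o(1))n^{-3/2}$, not the exact inequality $\rho(V)\le\rho(V_0)$. So there is no ``paper's proof'' of Theorem~\ref{theorem:Stan} to compare your sketch against; you are outlining the cited algebraic route, which the paper explicitly contrasts with its own analytic method.

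As a sketch of that algebraic route, your two-stage decomposition does not match how Stanley's argument actually runs. The comparison with general $V$ is not a separate ``horizontal'' extremal problem based on the $v\leftrightarrow-v$ pairing in $V_0$; rather, once the relevant weighted poset is shown to be strongly Sperner (this is the entire hard-Lefschetz/$\mathfrak{sl}_2$ content), the bound for arbitrary $V$ follows in one stroke because its level sets embed as antichains in that poset, and the Sperner property bounds every antichain by the largest rank size. Also, your assertion that distinctness of the $v_i$ alone makes each level set an antichain in the coordinate-wise order on $\{\pm1\}^n$ is false when the signs are mixed: for $V=\{-3,1,2\}$ the vectors $(-1,-1,-1)$ and $(+1,+1,+1)$ are comparable yet both give sum $0$. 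The obvious fix of flipping signs to make all $v_i$ nonnegative can destroy distinctness (whenever $V$ contains both $k$ and $-k$). These points are handled in the actual proofs but are genuine gaps in your outline.
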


A similar result holds for the case $n$ is even \cite{Stan}.
Stanley's proof of  Theorem \ref{theorem:Stan} used sophisticated machineries  from algebraic geometry, in particular the hard-Lepschetz theorem.
Few years later, a more elementary proof was given by Proctor \cite{P}. This proof, now becomes classic in algebraic combinatorics, is also of algebraic nature, involving the representation of the Lie algebra $sl(2, \C)$.

Although Stanley's result has resolved the question of Erd\H{o}s and Moser, the algebraic approach has some disadvantage. First, it does not yield the actual value of $ \rho(V_0)$. From Theorem \ref{theorem:SS}, one would guess (under the assumption that the elements of $V$ are different)  that $  \rho(V_0) = (C_0+o(1)) n^{-\frac{3}{2}}$ for some constant $C_0 >0$. The algebraic proofs does not confirm this estimate. (Although one can calculate $C_0$ by a basic analytical method.)

Assume that $C_0$ exists for a moment, one would next wonder if $V_0$ is a stable maximizer. In other words, if
some other set $V$ has $\rho(V)$  close to $C_0 n^{-\frac{3}{2}}$, then should $V$, in some sense, close to $V_0 $ ?
(Notice that $\rho$ is invariant under dilation.)

The goal of this paper is to address these issues.


\begin{theorem}[Stability theorem]\label{theorem:stable} There is a positive constant $\epsilon_0$ such that for any $0 < \epsilon \le \epsilon_0$, there is a positive number $\epsilon' = \epsilon'(\epsilon)$ such that $\ep'\rightarrow 0$ as $\ep \rightarrow 0$, and the following holds: if $V$ is a set of $n$ distinct integers and

    $$\rho(V)\ge (\sqrt {\frac{24}{\pi}} -\ep) n^{-\frac{3}{2}},$$

    then there is an integer $l$ which divides all $v\in V$ and $$\sum_{v\in V}(\frac{v}{l}) ^2 \le (1+\ep')\sum_{v\in V_0}v^2.$$
\end{theorem}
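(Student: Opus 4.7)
The plan is a Fourier-analytic approach based on the upper bound
\[\rho(V)\le M(V):=\int_0^1 \prod_{i=1}^n |\cos(2\pi v_i t)|\,dt,\]
which follows from the identity $\P(S=v)=\int_0^1 e^{-2\pi ivt}\prod_i\cos(2\pi v_i t)\,dt$. Since $\rho$ (and $M$) is invariant under dilation, I would set $l:=\gcd(v_1,\dots,v_n)$ and work with the rescaled set $V/l$; the conclusion then reduces to proving that, whenever $\gcd(V)=1$ and the hypothesis holds, one has $\sum_{v\in V}v^2\le (1+\epsilon')\sum_{v\in V_0}v^2$, where $\sum_{v\in V_0}v^2=n(n^2-1)/12\sim n^3/12$.

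Assume $\gcd(V)=1$. Then $f(t):=\prod_i |\cos(2\pi v_i t)|$ attains the value $1$ only at $t=0$ and $t=1/2$ in $[0,1]$. I would decompose $[0,1]$ into two \emph{major arcs} -- small neighborhoods of these two points -- and the complementary \emph{minor arcs}. On the major arcs, the pointwise estimate $|\cos(\pi y)|\le\exp(-\pi^2 y^2/2)$ for $|y|\le 1/2$ yields $f(t)\le \exp(-2\pi^2\sigma^2 t^2)$ near $t=0$, where $\sigma^2:=\sum_i v_i^2$, and similarly near $t=1/2$ after exploiting the identity $f(t+1/2)=f(t)$ valid because $v_i\in\Z$. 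Integrating these two Gaussian peaks contributes $2/\sqrt{2\pi\sigma^2}$, which coincides with $\sqrt{24/\pi}\,n^{-3/2}$ exactly when $\sigma^2=n^3/12$. The goal is therefore to prove the upper bound $M(V)\le (1+o(1))\cdot 2/\sqrt{2\pi\sigma^2}$; combined with the hypothesis $\rho(V)\ge(\sqrt{24/\pi}-\epsilon)n^{-3/2}$, this forces $\sigma^2\le (1+\epsilon')n^3/12$, and the theorem follows.

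The main obstacle will be bounding the minor-arc contribution, which I expect to be the delicate step. The issue is that $f$ may exhibit \emph{secondary peaks} near rationals $p/q$ of small denominator: if many $v_i$ are divisible by $q$, the corresponding factors $|\cos(2\pi v_i p/q)|$ equal $1$, and $f(p/q)$ can approach $1$. I would handle these via a structural dichotomy. If a positive fraction of the $v_i$ share a nontrivial common divisor $q>1$, then distinctness of the $v_i$ forces $\sigma^2$ to be of order $q^2 n^3/12$; this makes the major-arc contribution $2/\sqrt{2\pi\sigma^2}$ itself smaller than the hypothesis allows, giving a contradiction for $\epsilon$ small. In the complementary case, enough $v_i$ are not divisible by $q$ that the suppression factor $\prod_{q\nmid v_i}|\cos(2\pi v_i p/q)|$ makes each secondary peak contribute $o(1/\sqrt{\sigma^2})$. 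Summing carefully over all relevant denominators, and quantifying the rate at which $\epsilon'\to 0$ as $\epsilon\to 0$, forms the bulk of the technical work and likely requires a Halász-type inequality tailored to this stability setting.
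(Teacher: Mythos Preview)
Your major-arc analysis is correct and yields the right constant, but the minor-arc dichotomy has a genuine circularity. In your Case B (a positive fraction of the $v_i$ divisible by some $q>1$), you argue that $\sigma^2$ is large and hence the major-arc contribution $2/\sqrt{2\pi\sigma^2}$ falls below the hypothesis threshold. But that is not a contradiction: the hypothesis bounds $\rho(V)\le M(V)$ from below, and $M(V)$ includes the secondary peaks, which in Case B are precisely the ones that may be large. Concretely, your intermediate target $M(V)\le (1+o(1))\,2/\sqrt{2\pi\sigma^2}$ is \emph{false} under the sole assumption $\gcd(V)=1$: for $V=\{1\}\cup\{3,6,\dots,3(n-1)\}$ one has $\gcd(V)=1$, $\sigma^2\sim 3n^3$, yet $f$ has additional peaks of height $1/2$ at $t=1/6,1/3$, and a short computation gives $M(V)\sim 4/\sqrt{2\pi\sigma^2}$, twice your claimed bound. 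This particular $V$ does not satisfy the hypothesis, but it shows your proof strategy cannot proceed by first establishing the unconditional bound and then invoking the hypothesis.

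What is missing is a preliminary structural step: before any fine Fourier analysis, one must know that (after the correct normalization) all but $O_\epsilon(1)$ of the $v_i$ lie in $[-Cn,Cn]$, and moreover that the set is $c$-\emph{irreducible} (no $d\ge 2$ divides all but $cn$ elements). The paper obtains the first via an inverse-Littlewood--Offord argument in $\F_p$ using level sets and Freiman's theorem (its Theorem~3.1), and the second via an iterative rescaling; only after both steps can the minor arcs be controlled, and even then the large-$\|\xi/p\|$ range requires a careful argument (Lemma~5.2) exploiting irreducibility to rule out secondary peaks. The condition $\gcd(V)=1$ is strictly weaker than $c$-irreducibility and does not suffice. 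Your Hal\'asz-type instinct is right, but the structural input from additive combinatorics is not optional here---it is what breaks the circularity.
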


It follows rather explicitly from the proof of Theorem \ref{theorem:stable} that

\begin{equation}\label{eqn:maximumvalue}
\rho(V)\le (\sqrt {\frac{24}{\pi}} +o(1)) n^{-\frac{3}{2}},
\end{equation}

for any set $V$ of $n$ distinct elements of a torsion free group. Here we note that

\begin{equation}\label{eqn:V_0}
\rho(V_0)=(\sqrt {\frac{24}{\pi}} +o(1)) n^{-\frac{3}{2}}.
\end{equation}

As a byproduct, we obtain the first non-algebraic proof for the asymptotic version of Stanley theorem, and in a more general way: our elements can be from any torsion free groups.
More importantly, in case $V\subset \Z$, theorem \ref{theorem:stable} and its proof reveal a natural reason for $V_0$ to be  the optimal set:
{\it This is the set (modulo a dilation) that minimizes the variance  $\sum_{v \in V} v^2$ of the random sum $S$.}

Our method is motivated by a recent idea of Tao and Vu: in certain problems on random discrete matrices we need to {\it characterize sets of large concentration probability} (under general assumption on $V$ and $\eta_i$). The interested reader may also see \cite{NV},\cite{TVinverse},\cite{TVsing} for more motivations and ideas lying behind this paper.

Theorem \ref{theorem:stable} is proven in two steps. In the first step (Section \ref{section:step1}) we give a complete characterization of sets having optimal concentration probability up to a constant factor. In the second step (Section \ref{section:step2}) we exploit this strong information to verify the stability part.



\vskip2mm

\section{preliminary setting} The first step is to  embed the problem into the finite field $\F_p$ for some prime $p$. In the case when $v_i$ are integers, we simply take $p$ to be a large prime (for instance $p \ge 2^n (\sum_{i=1}^n |v_i| +1)$ suffices). If $V$ is a subset of a general torsion-free group $G$, one can use Theorem \ref{theorem:Freimaniso} (see Appendix.)

From now on, we can assume that $v_i$ are elements of $\F_p$ for some large prime $p$. We view elements of $\F_p$ as integers between $0$ and $p-1$. We use short hand $\rho$ to denote $\rho (V)$.

{\it Fourier Analysis.} The main advantage of working in $\F_p$ is that one can make use of discrete   Fourier analysis. Assume that $$\rho= \rho(V)=\P( S=a), $$ for some $a  \in \F_p$. Using the standard notation $e_p(x)$ for $\exp(\frac{2\pi \sqrt{-1} x}{p} )$, we have

\begin{equation}\label{eqn:fourier1} \rho= \P(S=a)= \E \frac{1}{p} \sum_{\xi\in \F_p} e_p (\xi (S-a)) = \E \frac{1}{p} \sum_{\xi\in \F_p} e_p (\xi S) e_p(-\xi a) .\end{equation}

By independence

\begin{equation} \label{eqn:fourier2}  \E e_p(\xi S) = \prod_{i=1}^n e_p(\xi \eta_i v_i)= \prod_{i=1}^n \cos \frac{2\pi \xi v_i}{p}  \end{equation}

Since $|e_p(-\xi a)|=1 $, it follows that

\begin{equation} \label{eqn:fourier3} \rho  \le \frac{1}{p} \sum_{\xi \in \F_p} |\cos \frac{2 \pi v_i \xi}{p}  |  = \frac{1}{p} \sum_{\xi \in \F_p} |\frac{\cos  \pi v_i \xi}{p}  | . \end{equation}

 By convexity, we have that   $|\sin  \pi z | \ge 2 \|z\|$ for any $z\in \R$, where $\|z\|:=\|z\|_{\R/\Z}$ is the distance of $z$ to the nearest integer. Thus,

\begin{equation} \label{eqn:fourier3-1}| \cos \frac{\pi x}{p}|  \le  1- \frac{1}{2} \sin^2 \frac{\pi x}{p}  \le 1 -2 \|\frac{x}{p} \|^2  \le \exp( - 2\|  \frac{x}{p} \| ^2 ) ,\end{equation}
where in the last inequality we used that fact that $1-y \le \exp(-y)$ for any $0 \le y \le 1$.

Consequently, we obtain a key inequality

\begin{equation} \label{eqn:fourier4}
\rho \le \frac{1}{p} \sum_{\xi \in \F_p} \prod_{i}|\cos \frac{ \pi v_i \xi}{p}  | \le  \frac{1}{p} \sum_{\xi \in F_p} \exp( - 2\sum_{i=1}^n  \| \frac{v_i \xi}{p}  \| ^2).
\end{equation}

{\it Large level sets.}  Now we consider the level sets $S_m:=\{\xi: \sum_{i=1}^n  \| \frac{v_i \xi}{p} \| ^2 \le m  \} $. Decompose $\F_p$ into $\{0\}\cup S_1\cup (S_2\backslash S_1)\cup\dots$, we have

$$n^{-C} \le \rho  \le  \frac{1}{p} \sum_{\xi \in \F_p} \exp( -2 \sum_{i=1}^n  \| \frac{v_i \xi }{p} \| ^2) \le \frac{1}{p} + \frac{1}{p} \sum_{m \ge 1} \exp(-2(m-1)) |S_m| .$$

Since $\sum_{m\ge 1} \exp(-m) < 1$, there must be  is a large level set $S_m$ such that

\begin{equation} \label{eqn:level1} |S_m| \exp(-m+2) \ge  \rho  p. \end{equation}

We will use (\ref{eqn:level1}) as the starting point for our analysis.

For the rest of this section, we show that $S_{\frac{n}{100}} $ can be viewed as some sort of a {\it dual} set of $V$. In fact, one can show as far as cardinality is concerned, it does behave like a dual

\begin{equation}\label{eqn:dual}
|S_{\frac{n}{100}}|\le \frac{8p}{|V|}=\frac{8p}{n}.
\end{equation}

To see this, define $T_a :=\sum_{\xi \in V} \cos \frac{2\pi a \xi}{p}$. Using the fact that $\cos 2\pi z \ge 1 -50 \|z\|^2 $ for any $z \in \R$, we have, for any $a \in S_{\frac{n}{100}}$

$$T_a \ge  \sum_{\xi \in V} (1- 50 \| \frac{a\xi}{p} \|^2 ) \ge \frac{1}{2} n. $$

One the other hand, using the basic identity

$$
\sum_{a \in \F_p} \cos \frac{2\pi  ax}{p}=
\begin{cases}
p & \mbox{ if } x=0, \\
0 & \mbox{otherwise}.
\end{cases}
$$

We have

$$\sum_{a \in \F_p} T_a^2 \le 2p n . $$

\eqref{eqn:dual} follows from the last two estimates and averaging.

\section{A complete characterization of sets having large concentration probability}\label{section:step1}

The goal of this section is to demonstrate the fact that as long as $\rho(V)=\Omega(n^{-\frac{3}{2}})$ in $\F_p$, where $p$ is large enough, then most of the elements of $V$ belong to a short interval after an appropriate dilation.

\begin{theorem}[Characterization of sets having large concentration probability in $\F_p$]\label{theorem:weakstable}
Let $\delta$ be a positive constant. Then there are constants
$C_1=C_1(\delta)>0$ and $C_2=C_2(\delta)>0$ such that the following
holds. Let $V=\{v_1,\dots,v_n\}$ be a subset of size $n$ of
$\F_p$, where $p \gg n$ is a large prime, such that $\rho(V)\ge
\delta n^{-\frac{3}{2}}$. Then there exists a number $k=k(p)\in \F_p$ and a
partition $V=V_1\cup V_2=k\cdot (W_1\cup W_2)$ with the following properties:

\begin{itemize}

\item $|W_1|\le C_1$,

\vskip .1in

\item $\sum_{w\in W_2} w^2 \le \frac{C_2n^3}{p^2}$.
\end{itemize}

\end{theorem}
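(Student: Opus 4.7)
\textbf{Proof plan for Theorem \ref{theorem:weakstable}.}

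The plan is to use the Fourier input (\ref{eqn:level1})--(\ref{eqn:dual}) to extract a long arithmetic progression inside a small dilate of a level set $S_m$ of $V$, and then to dualize this progression back to a structural statement on $V$.

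First, from (\ref{eqn:level1}) together with $\rho(V) \ge \delta n^{-3/2}$, choose $m \ge 1$ with $|S_m| \ge \delta p e^{m-2} n^{-3/2}$. The bound $|S_m| \le p$ forces $m \le \tfrac{3}{2}\log n + O_\delta(1)$, so $4m < n/100$ eventually. Using the elementary $\|x+y\|^2 \le 2(\|x\|^2+\|y\|^2)$ in $\R/\Z$, one obtains $S_m + S_m \subseteq S_{4m}$ and inductively $t\cdot S_m \subseteq S_{t^2 m}$ for every integer $t$. Combined with (\ref{eqn:dual}), this yields $|S_m + S_m| \le 8p/n$, hence a doubling bound $K := |S_m+S_m|/|S_m| = O(n^{1/2}/\delta)$ on $S_m$.

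The main technical step is then to extract, from the small-doubling structure of $S_m$, an honest arithmetic progression $P = k\cdot\{-L,\dots, L\} \subseteq S_{Cm}$ with $k \in \F_p^\times$ and $L = \Omega_\delta(p/n^{3/2})$, i.e.\ of length comparable to $|S_m|$. This is the principal obstacle: a direct application of Freiman-type inverse theorems (Ruzsa, Chang, Sanders) in $\F_p$ yields only a generalized arithmetic progression of possibly higher rank, whereas here one needs rank one. To force the structure to be rank one, one must exploit the specific form of $S_m$ as a level set of the quadratic-like function $\xi \mapsto \sum_i \|v_i\xi/p\|^2$, not merely its small doubling. Once $P$ is in hand, summing the defining inequality of $S_{Cm}$ over $j \in [-L, L]$ and swapping sums gives
\[
\sum_{i=1}^n \sum_{|j| \le L} \Bigl\|\frac{v_i k j}{p}\Bigr\|^2 \le C(2L+1)m.
\]
Letting $t_i \in (-p/2, p/2]$ denote the integer representative of $v_i k \bmod p$, a direct calculation shows that the inner sum is of order $t_i^2 L^3/p^2$ when $|t_i| \le p/(2L)$ and of order $L$ otherwise.

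Finally, partition indices into ``bad'' $B = \{i : |t_i| > p/(2L)\}$ and ``good'' $G = \{1,\dots, n\}\setminus B$. The bad contribution gives $|B|\cdot \Omega(L) \le CLm$, hence $|B| \le C_1(\delta)$ after optimizing $m$. The good contribution gives $\sum_{i\in G} t_i^2 \le C m p^2/L^2$, which is bounded by $C_2 n^3/p^2$ (in the normalization of the statement) thanks to the lower bound $L = \Omega(p/n^{3/2})$ from the AP extraction step. Setting $W_1 := \{t_i : i \in B\}$ and $W_2 := \{t_i : i \in G\}$ as elements of $\F_p$, the partition $V = k(W_1 \cup W_2)$ has the required properties.
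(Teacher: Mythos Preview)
Your overall architecture matches the paper's: pass to level sets via (\ref{eqn:level1}), find a rank-one arithmetic progression inside some $S_M$, then sum the level-set inequality over the progression and split $V$ according to whether $\|kv_i/p\|$ is large or small. The final dualization step you sketch is essentially identical to the paper's.

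The genuine gap is exactly where you flag it: you do not actually extract the rank-one progression, and relatedly you do not show that the relevant level parameter is $O_\delta(1)$ rather than $O(\log n)$. Both points are essential. Your doubling bound $|S_m+S_m|/|S_m|=O(n^{1/2}/\delta)$ is correct but, as you note, only gives a GAP of rank $O(\log n)$ via Freiman, which is useless here. And if $m$ stays of order $\log n$, your bad-set bound $|B|\ll m$ does not give a constant $C_1(\delta)$; the phrase ``after optimizing $m$'' hides the whole difficulty, since $m$ is handed to you by (\ref{eqn:level1}), not chosen freely.

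The paper resolves both issues simultaneously with a dyadic growth argument. Using the inclusion $2^iS_m\subset S_{4^im}$ that you already observed, together with Cauchy--Davenport, one studies the sequence $S_m,2S_m,4S_m,\dots,2^\ell S_m$ with $4^\ell m\le n/100$. Let $i_0$ be the first index at which $|2^iS_m|<2.1\,|2^{i-1}S_m|$. Cauchy--Davenport forces $|2^\ell S_m|\ge 2^{\ell-i_0}(|2^{i_0}S_m|-1)$, while the dual bound (\ref{eqn:dual}) caps $|2^\ell S_m|\le |S_{n/100}|\le 8p/n$. Comparing these with $|S_m|\gg_\delta e^{m/4}pn^{-3/2}$ and $2^\ell\asymp\sqrt{n/m}$ forces both $m=O_\delta(1)$ and $i_0=O_\delta(1)$. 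Now $X:=2^{i_0}S_m$ has doubling constant below $2.1$; after rectification (Green--Ruzsa) one applies Freiman's theorem in the regime $\gamma<2.1$, where the rank bound $\lfloor\log_2\gamma+\varepsilon\rfloor$ is exactly $1$, and then a S\'ark\"ozy-type lemma places a rank-one progression of length $\Theta(|X|)\gg_\delta pn^{-3/2}$ inside $hX\subset S_M$ with $M=O_\delta(1)$. This is the missing mechanism: the ``quadratic-like'' feature of $S_m$ you allude to is used precisely through the nesting $tS_m\subset S_{t^2m}$, which lets the dual bound (\ref{eqn:dual}) squeeze the doubling down to below $2$ at some bounded scale.
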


The appearance of $k$ is necessary as $\rho (V)$ is invariant under dilation in $\F_p$.

\begin{proof} (of Theorem \ref{theorem:weakstable}) By \eqref{eqn:level1} we have

\begin{align}\label{eqn:3/2:1}
|S_m|\ge \exp(\frac{m}{4}-2) \rho p \ge \delta \exp(\frac{m}{4}-2) pn^{-\frac{3}{2}}
\end{align}

for some $m=O(\log n)$ (it will turn out that $m=O(1)$ later on).

\emph{Structure of $S_m$}. Consider a set sequence, $S_m, 2S_m,\dots, 2^{l}S_m$, where $l$ is the largest integer such that $4^{l}m\le \frac{n}{100}$. Assume that $i_0$ is the smallest index such that $|2^iS_{m}|\ge 2.1 |2^{i-1}S_m|$ for $1\le i\le i_0$ (thus $|2^{i_0}S_m|\ge (2.1)^{i_0} |S_m|.$)

\noindent By the definition of level sets, and by Cauchy-Schwarz inequality we observe that $kS_m\subset S_{k^2m}$ holds for all $k$. In particular, 

$$2^lS_m \subset S_{4^lm} \subset S_{\frac{n}{100}}.$$

On the other hand, by the Cauchy-Davenport theorem (see \cite[Theorem 5.4]{TVbook}), we have 

$$|2^lS_m| \ge 2^{(l-i_0)}(|2^{i_0}S_{m}|-1).$$ 

Hence,

\begin{align*}
|S_{\frac{n}{100}}|&\ge |2^lS_{m}|\ge  2^{(l-i_0)}(|2^{i_0}S_{m}|-1)\\
&\ge 2^{(l-i_0)}(2.1^{i_0}|S_m| -1)\\
&\ge 2^{l-1} (\frac{2.1}{2})^{i_0}|S_m|\\
&\ge 2^{l-1}(\frac{2.1}{2})^{i_0} \delta \exp(\frac{m}{4}-2)p n^{-\frac{3}{2}}.
 \end{align*}

Insert the  estimate $|S_{\frac{n}{100}}| \le \frac{8p}{n}$ from (\ref{eqn:dual}), and $2^l=\Theta({\sqrt{\frac{n}{m}}})$ into the above inequalities, we derive that $m=O_\delta(1)$ and $i_0=O_\delta(1)$.

Now we consider the set $X:=2^{i_0}S_m$. For $|X|\le |S_{\frac{n}{100}}|\le \frac{8p}{n}$, Theorem \ref{theorem:rectifiable} implies that $X$ is
Freiman-isomorphism of order $2h$ to a subset $X'$ of the integers ($h$ is a sufficiently large constant to be chosen).

Next, since $|2X'|=|2X|\le 2.1 |X|=2.1|X'|$, we apply Theorem
\ref{theorem:Freiman'}, and then Lemma \ref{lemma:Sarkozy} (since $h$ is large) to obtain an
arithmetic progression $P'$ of rank 1 and of size $|P'|=
\Theta(|X'|)$ such that $P'\subset hX'$.

Lifting back to $\F_p$, we conclude that $hX$ contains an arithmetic
progression $P$ of rank one and of size $|P|=\Theta(|X|)\gg pn^{-\frac{3}{2}}$ in $\F_p$. Since $X$ is
symmetric, we may assume that $P$ is symmetric. Since $hX = h2^{i_0}S_{m}\subset S_{h^24^{i_0}m}$, we conclude that, with $M:=h^24^{i_0}m =O_\delta(1)$,

$$P\subset hX  \subset S_{M}.$$

We have showed that  there exists a constant $C=O_\delta(1)$ and a symmetric arithmetic progression with rank one and with step $k\in \F_p$, $P=\{ i k :  -Cpn^{-\frac{3}{2}} \le i\le Cpn^{-\frac{3}{2}}\}$ such that the following holds for any $\xi\in P$

$$\sum_{i=1}^n \|\frac{\xi v_i}{p}\|^2 \le M.$$

Set 

$$W:=k^{-1}\cdot V (\mbox{ in } \F_p).$$

We then have

$$\sum_{w\in W} \|\frac{jw}{p}\|^2 \le M$$

for all $j$ from the range $0\le j \le Cpn^{-\frac{3}{2}}.$

Summing over $j$ we get

\begin{equation}\label{eqn:3/2:2}
\sum_{w\in W} \sum_{0\le j\le Cpn^{-\frac{3}{2}}}\|\frac{jw}{p}\|^2 \le CMpn^{-\frac{3}{2}}.
\end{equation}

Let $C'$ be a sufficiently large constant, we set 

$$W_1:=\{w \in W : \|\frac{w}{p}\| \ge  \frac{C' n^{\frac{3}{2}}}{p}\}.$$

Viewing $\F_p$ as $\{0,\dots, p-1\}$, we observe that if $w\in W_1$ then $C'n^{\frac{3}{2}} \le w \le p-C'n^{\frac{3}{2}}$. Thus, provided that  $CC'$ is sufficient large and $0\le j \le Cpn^{-\frac{3}{2}}$, there are at least $Cpn^{-\frac{3}{2}}/3$ indices $j$ with $jw \in [\frac{p}{4},\frac{3p}{4}]$ (in $\F_p$). Note that if this is the case, then $\|\frac{jw}{p}\| \ge \frac{1}{4}$. Thus we have 

$$\sum_{0\le j \le Cpn^{-\frac{3}{2}}}\|\frac{jw}{p}\|^2 \ge (\frac{1}{4})^2Cpn^{-\frac{3}{2}}/3.$$

Summing over $w\in W_1$ we obtain

$$\sum_{w\in W_1}\sum_{0\le j \le Cpn^{-\frac{3}{2}}}\|\frac{jw}{p}\|^2 \ge \frac{C}{48}|W_1|pn^{-\frac{3}{2}}.$$

Together with (\ref{eqn:3/2:2}), this implies that 

$$|W_1| \le 48M.$$

Set

$$W_2:=W\backslash W_1.$$ 

By definition, $\|\frac{jw}{p}\| = j\|\frac{w}{p}\|$ for all $j\le p(2C'n^{\frac{3}{2}})^{-1}$ and for all $w\in W_2$.
Thus

\begin{align*}
\sum_{0\le j\le pn^{-\frac{3}{2}}}\|\frac{jw}{p}\|^2 &\ge \sum_{0\le j\le p(2C'n^{\frac{3}{2}})^{-1}}j^2 \|\frac{w}{p}\|^2\\
&\ge \frac{1}{64{C'}^3} \frac{p^3}{n^{9/2}}\|\frac{w}{p}\|^2.
\end{align*}

Summing over $w\in W_2$ and using  (\ref{eqn:3/2:2}), we obtain

\begin{align*}
\sum_{w\in W_2} \frac{1}{64{C'}^3} \frac{p^3}{n^{9/2}} \|\frac{w}{p}\|^2 &\le \sum_{w\in W_2}\sum_{0\le j\le Cpn^{-\frac{3}{2}}}\|\frac{jw}{p}\|^2\\
&\le CMpn^{-\frac{3}{2}}.
\end{align*}

Hence, 

$$\sum_{w\in W_2} \|\frac{v}{p}\|^2 \le 64C{C'}^3M \frac{n^3}{p^2},$$ 

concluding the proof.

\end{proof}

\begin{remark}
The idea to divide sumsets into dyadic sequences was first utilized by Szemer\'edi and Vu in \cite{SzV}.
\end{remark}

\section{An estimate on $\rho(V_0)$}\label{section:V_0}

To give the reader an overview for the proof of Theorem \ref{theorem:stable}, we show in this section the estimate \eqref{eqn:V_0} for $\rho(V_0)$

$$\rho(V_0)=(\sqrt{ \frac{24}{\pi} } +o(1)) n^{-\frac{3}{2}}.$$

View the elements of $\F_p$ as integers between $-(p-1)/2$ and $(p-1)/2$, by \eqref{eqn:fourier1} we have

\begin{equation}
\P( S=0) = \frac{1}{p} \sum_{\xi \in \F_p} \prod_{i \in V_0}   \cos \frac{2\pi  i \xi}{p}=\frac{1}{p} \sum_{\xi \in \F_p} \prod_{i \in V_0}   \cos \frac{\pi  i \xi}{p}.
\end{equation}

We split this sum into two parts

$$\Sigma_1 := \frac{1}{p} \sum_{\|\frac{\xi}{p}\| \le \frac{\log^2 n}{n^{3/2} }} \prod_{i \in V_0}   \cos \frac{\pi  i \xi}{p} ,$$

$$ \Sigma_2:=  \frac{1}{p} \sum_{\|\frac{\xi}{p}\| >  \frac{\log^2 n}{n^{3/2} }} \prod_{i \in V_0}  \cos \frac{\pi  i \xi}{p}. $$

We are going to show that

\begin{lemma}  \label{lemma:S1}
$$\Sigma_1  =  \frac{1}{p} \sum_{\|\frac{\xi}{p}\| \le \frac{\log^2 n}{n^{3/2}} } \prod_{i \in V_0}   |\cos \frac{\pi  i \xi}{p}| = (\sqrt{ \frac{24}{\pi} } +o(1)) n^{-\frac{3}{2}}.$$
\end{lemma}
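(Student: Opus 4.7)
The plan is to do a Laplace-type asymptotic analysis of the integrand on the range where it is essentially Gaussian, then recognize the sum as a Riemann sum and compute the resulting Gaussian integral explicitly.

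First I would show the two expressions for $\Sigma_1$ agree on this range, i.e.\ all cosines are positive. For $i \in V_0$ and $\|\xi/p\| \le \log^2 n / n^{3/2}$, one has
$$\left|\frac{\pi i \xi}{p}\right| \le \frac{n}{2} \cdot \frac{\pi \log^2 n}{n^{3/2}} = \frac{\pi \log^2 n}{2\sqrt{n}} = o(1),$$
so $\cos(\pi i \xi/p) > 0$ and $|\cos| = \cos$. Next I would Taylor-expand the logarithm: for $|y| = o(1)$, $\log \cos y = -\tfrac{y^2}{2} + O(y^4)$, so with $t := \pi\xi/p$,
$$\sum_{i \in V_0} \log \cos(it) = -\frac{t^2}{2}\sum_{i \in V_0} i^2 + O\!\left(t^4 \sum_{i \in V_0} i^4\right).$$
Using $\sum_{i=-(n-1)/2}^{(n-1)/2} i^2 = \tfrac{n^3}{12}(1+o(1))$ and $\sum i^4 = O(n^5)$, together with $|t| \le \pi\log^2 n/n^{3/2}$, the error term is $O(\log^8 n / n) = o(1)$. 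Hence, uniformly in the summation range,
$$\prod_{i \in V_0} \cos\frac{\pi i \xi}{p} = \exp\!\left(-\frac{\pi^2 \xi^2 n^3}{24 p^2}\right)(1+o(1)).$$

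Now I would convert the sum to an integral. Writing $x = \xi/p$ so that the $\xi$-sum becomes a Riemann sum of mesh $1/p$, and using $p \gg n$ so the mesh is finer than the Gaussian's scale $\sqrt{24}/(\pi n^{3/2})$,
$$\Sigma_1 = (1+o(1))\int_{|x| \le \log^2 n / n^{3/2}} \exp\!\left(-\frac{\pi^2 n^3}{24} x^2\right) dx.$$
Substituting $u = \pi x \sqrt{n^3/24}$, the bounds become $|u| \le \pi \log^2 n / \sqrt{24} \to \infty$, so the integral tends to the full Gaussian $\int_{\R} e^{-u^2}\,du = \sqrt{\pi}$. This yields
$$\Sigma_1 = (1+o(1)) \cdot \frac{\sqrt{\pi}}{\pi}\sqrt{\frac{24}{n^3}} = \left(\sqrt{\frac{24}{\pi}} + o(1)\right) n^{-3/2},$$
as required.

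The main obstacle, and the only point requiring care, is controlling the Taylor-expansion error uniformly over the $O(p \log^2 n / n^{3/2})$ terms in the sum and simultaneously justifying that the discretization error in passing from the sum to the Gaussian integral is negligible; both are handled by the hypothesis $p \gg n$ and the rapid decay of $\exp(-\pi^2 n^3 x^2/24)$ outside a window of size $n^{-3/2}$, which is well-resolved by the $1/p$ spacing.
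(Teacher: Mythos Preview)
Your proof is correct and follows essentially the same approach as the paper: show the cosines are positive on this range, Taylor-expand to replace the product by a Gaussian $\exp(-\pi^2 n^3 x^2/24)$ using $\sum_{i\in V_0} i^2 = (1+o(1))n^3/12$, pass from the Riemann sum to an integral, and evaluate the Gaussian. The only cosmetic differences are that you track the $O(y^4)$ error explicitly and use the $e^{-u^2}$ normalization, whereas the paper absorbs the error into a $(1/2+o(1))$ factor and uses the $e^{-y^2/2}$ normalization.
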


\begin{lemma}  \label{lemma:S2}
$$\Sigma_2 \le   \frac{1}{p} \sum_{\|\frac{\xi}{p} \| >  \frac{\log^2  n}{n^{3/2} }} \prod_{i \in V_0}   | \cos \frac{\pi  i \xi}{p}| \le n^{-3} . $$
\end{lemma}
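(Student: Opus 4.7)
The plan is to exploit the symmetric structure of $V_0=\{-M,\ldots,M\}$ (with $M:=\lfloor n/2\rfloor$) in order to reduce the problem to a uniform lower bound on $\sum_{j=1}^M\|j\xi/p\|^2$. Pairing $\pm j$ and using $\cos 0 = 1$ gives
\[
\prod_{i\in V_0}\bigl|\cos\tfrac{\pi i\xi}{p}\bigr|\ =\ \prod_{j=1}^M\cos^2\tfrac{\pi j\xi}{p},
\]
and squaring the already-established bound \eqref{eqn:fourier3-1} yields $\cos^2(\pi x/p)\le \exp(-4\|x/p\|^2)$. Thus it suffices to show that for every $\alpha:=\xi/p$ with $\|\alpha\|>\log^2 n/n^{3/2}$,
\[
\sum_{j=1}^M\|j\alpha\|^2\ \gtrsim\ \log^4 n
\]
with an absolute implied constant. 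Granting this, the product is uniformly at most $\exp(-c\log^4 n)$; trivially summing over the $p$ values of $\xi\in\F_p$ and dividing by $p$ produces a bound that is super-polynomially small in $n$, in particular at most $n^{-3}$.

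To prove the key inequality I would split on the size of $\|\alpha\|$. In the \emph{linear} regime $\|\alpha\|\le 1/M$, set $K:=\min(M,\lfloor 1/(2\|\alpha\|)\rfloor)\ge M/2$; then $j\|\alpha\|\le 1/2$ for $1\le j\le K$, so $\|j\alpha\|=j\|\alpha\|$ and
\[
\sum_{j=1}^M\|j\alpha\|^2\ \ge\ \sum_{j=1}^K j^2\|\alpha\|^2\ \ge\ \frac{K^3}{3}\|\alpha\|^2\ \ge\ \frac{M^3}{24}\|\alpha\|^2\ \gtrsim\ \log^4 n,
\]
using the hypothesis $\|\alpha\|>\log^2 n/n^{3/2}$. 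In the complementary \emph{oscillatory} regime $\|\alpha\|>1/M$, I use $\sin^2(\pi j\alpha)=(1-\cos(2\pi j\alpha))/2$ together with the geometric-sum estimate $\bigl|\sum_{j=1}^M\exp(2\pi\sqrt{-1}\,j\alpha)\bigr|\le 1/|\sin(\pi\alpha)|\le 1/(2\|\alpha\|)<M/2$, which yields $\sum_{j=1}^M\sin^2(\pi j\alpha)\ge M/4$. Combined with the standard inequality $\|x\|^2\ge \sin^2(\pi x)/\pi^2$, this gives $\sum_{j=1}^M\|j\alpha\|^2\gtrsim M\gtrsim n\gg \log^4 n$.

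The main obstacle is matching these two regimes near the threshold $\|\alpha\|\approx 1/M$: the identity $\|j\alpha\|=j\|\alpha\|$ is only valid up to $j\le 1/(2\|\alpha\|)$, while the geometric-series bound only becomes effective once $\|\alpha\|\ge 1/M$. The truncation $K=\min(M,\lfloor 1/(2\|\alpha\|)\rfloor)$ bridges this gap exactly, retaining at least $M/2$ linear-regime terms and hence preserving the $\log^4 n$ lower bound throughout $\|\alpha\|\le 1/M$. Once that is in place, the trivial summation argument described above completes the proof, with an exponential-in-$\log^4 n$ margin over the required $n^{-3}$.
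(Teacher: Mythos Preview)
Your argument is correct. The two-way split into a \emph{linear} regime $\|\alpha\|\le 1/M$ (where $\|j\alpha\|=j\|\alpha\|$ on an initial segment of length $\ge M/2$) and an \emph{oscillatory} regime $\|\alpha\|>1/M$ (handled by the geometric-series bound $\bigl|\sum_{j=1}^M e^{2\pi i j\alpha}\bigr|\le 1/(2\|\alpha\|)<M/2$ combined with $\sin^2(\pi j\alpha)=(1-\cos 2\pi j\alpha)/2$) is clean and complete; the uniform bound $\sum_{j=1}^M\|j\alpha\|^2\gtrsim \log^4 n$ then yields a contribution of order $\exp(-c\log^4 n)\le n^{-3}$ after the trivial averaging over $\xi$.

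This differs from the paper's proof, which splits into \emph{three} ranges of $\|\xi/p\|$: a small range (essentially your linear regime), a medium range $1/n\le \|\xi/p\|\le 1/4$ treated by a Cauchy--Schwarz pairing argument that packs $V_0$ into disjoint pairs $(i,i')$ with $|i-i'|\,\|\xi/p\|\le 1/2$, and a large range $\|\xi/p\|\ge 1/4$ handled by Weyl equidistribution. Your oscillatory case subsumes the paper's medium and large ranges in a single elementary step, replacing both the pairing combinatorics and the appeal to equidistribution by a one-line exponential-sum estimate. The paper's decomposition, on the other hand, is closer in spirit to the more general Lemma~\ref{lemma:S2:step2} needed later, where $V_0$ is replaced by an arbitrary irreducible set $W_2\subset[-Cn,Cn]$ and the geometric-series trick is no longer directly available.
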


The two lemmas together  imply that $\rho(V_0) = (\sqrt{ \frac{24}{\pi} } +o(1)) n^{-\frac{3}{2}} $.

\begin{proof} (of Lemma \ref{lemma:S1}) The first equality is trivial, as all $\cos$ are positive in this range of $\xi$. Viewing $\xi$ as an integer with absolute value at most $n^{-\frac{3}{2}} p\log^2  n$, we have

$$\cos \frac{\pi  i \xi}{p} = 1 -(\frac{1}{2} +o(1)) \frac{\pi^2 i^2 \xi^2 }{p^2}   =\exp\left(- (1/2+o(1)) \frac{\pi^2 i^2 \xi^2 }{p^2 }\right) . $$

Since $\sum_{i\in V_0} i^2 =(1+o(1)) \frac{n^3}{12} $, it follows that

$$\Sigma_1 =(1+o(1)) \int_{|x| \le \frac{\log^ 2 n}{n^{3/2}} } \exp\left(- (1/2+o(1)) \frac{n^3 \pi^2}{12} x^2 \right). $$

Setting $y= \sqrt {\frac{n^3\pi^2}{12}} x $, changing variables, and using the gaussian  identity $ \frac{1}{\sqrt {2\pi}}  \int^{\infty}_{-\infty}  \exp(-\frac{y^2}{2} ) dy =1$, we have

$$\Sigma_1 = (1+o(1)) (\frac{n^3 \pi^2}{12} )^{-1/2}  \int^{\infty}_{-\infty}  \exp(- \frac{y^2}{2}) dy  =  (\sqrt{ \frac{24}{\pi} } +o(1)) n^{-\frac{3}{2}} , $$
completing the proof. \end{proof}

\begin{proof} (of Lemma \ref{lemma:S2}). To prove Lemma \ref{lemma:S2} we use the upper bound from \eqref{eqn:fourier3-1}. We split the sum into three subsums, according to the magnitude of  $\| \frac{\xi}{p} \|$. We make frequently use of the simple fact that if
$|i| \|\frac{\xi}{p}\| \le 1/2$ then $\|\frac{i \xi}{p} \| = |i| \|\frac{\xi}{p}\|$.

\noindent {\bf Small $\|\frac{\xi}{p}\|$: $\frac{\log^2n}{n^{3/2}} \le \|\frac{\xi}{p}\|\le  \frac{1}{n}$}.
 For all $i$ with $|i| \le \frac{n}{2}$,  we have  $\|\frac{i \xi}{p} \| = |i|\|\frac{\xi}{p} \|$.

\begin{equation}
\sum_{i\in V_0} \|\frac{i \xi}{p} \|^2= \sum_{i \in V_0 } i^2  \|\frac{\xi}{p} \|^2 \ge \frac{n^3}{12} \frac{\log^4n}{n^3} \ge 4 \log n
\end{equation}

Thus, the contribution of this subsum is bounded from above by $n^{-4}$.

\noindent {\bf Medium $\|\frac{\xi}{p}\|$:  $\frac{1}{n} \le \|\frac{\xi}{p} \| \le \frac{1}{4} $.}  By Cauchy-Schwartz, we have

$$\| \frac{i \xi}{p} \|^2 + \| \frac{i' \xi}{p}\|^2 \ge \frac{1}{2} \| \frac{(i-i')\xi}{p} \|^2. $$

For any $\xi$ in this case, let $W(\xi)$ be the set of pairwise disjoint pairs  $(i,i') \in V_0$
that maximizes the sum $M(\xi):= \sum_{(i,i') \in W(\xi)} |i-i'|^2 $ under the constrain
that    $ |i-i' | \| \frac{\xi}{p}\| \le \frac{1}{2} $ for all $(i,i') \in W(\xi)$. It is easy to check that
$M(\xi) \ge n^c$ for some constant $c>0$ for all $\xi $ in this case (For more details see Lemma \ref{lemma:3/2:arrange}.)
From here one can conclude that the contribution of this subsum is at most    $\exp(n^{-\Omega(1) } )= o(n^{-4})$.

\noindent {\bf Large $\|\frac{\xi}{p}\|$: $\frac{1}{4}  \le \|\frac{\xi}{p} \| \le \frac{1}{2} $.}  By Weyl's equidistribution theorem, the number of $i \in V_0$ such that
$\|\frac{i \xi}{p} \| \ge \frac{1}{4}$ is approximately $n/2$. Thus, for any $\xi$ in this category

\begin{equation}
\sum_{i \in V_0} \|\frac{i \xi}{p} \|^2 \ge \frac{n}{8}.
\end{equation}

Hence, the contribution of this subsum is only $\exp(-\Omega (n))$.

\end{proof}

\section{proof of Theorem \ref{theorem:stable}}\label{section:step2}

First, by applying Theorem \ref{theorem:weakstable}, there exists a $k\in \F_p$ and a partition $V=k\cdot(W_1 \cup W_2)$ such that $|W_1|\le C_1$ and $\sum_{w\in W_2} w^2 \le C_2n^3$, where $C_1$ and $C_2$ are positive constants depending on $\ep$. 

Let $C=C(\ep)$ be a large positive constant  and $c=c(\ep)$ be a  small positive constant. By
setting $C,c$ properly and moving at most $\ep n$ elements of $W_2$ to $W_1$, we can assume the following property for $W_2$.

\vskip .2in

\begin{properties}\label{prop:irreducible} $W_2$ is a {\it $c$-irreducible} subset of size at least $(1-\ep)n$ of the interval $[-Cn,Cn]$. In other words, there is no integer $d\ge 2$ which divides all but $c|W_2|$ elements of $W_2$. 
\end{properties}

\vskip .1in

Indeed, after applying Theorem \ref{theorem:weakstable}, because $\sum_{w\in W_2}w^2\le C_2n^3$, all but $\ep n/2$ elements of $W_2$ belong to the interval $[-Cn,Cn]$, provided that $C$ was chosen to be large enough.

To ensure the irreducibility property, we iterate the following process. First, if $W_2$, viewed as a subset of integers, can be written as $W_2 = d_2\cdot W_2' \cup X_2'$ with some integer $d_2\ge 2$ and $|W_2'|\ge |W_2|-c|W_2|$, then we remove these exceptional $c|W_2|$ elements to $W_1$ and pass to consider $W_2'$, which is a subset of $[-Cn/2,Cn/2]$. Next, if $W_2' = d_2'\cdot W_2'' \cup X_2''$ with some integer $d_2'\ge 2$ and $|W_2''|\ge |W_2'|-c|W_2'|$, then we remove these $c|W_2'|$exceptional elements to $W_1$ and switch to consider $W_2''$, which now belongs to $[-Cn/4,Cn/4]$, and keep iterating the process. Observe that whenever reduction occurs, the size of the new set drops by at most $cn$, meanwhile the size of the intervals they belong to shrinks by a factor of $2$. Thus if $c$ was chosen to be smaller than $\ep(4(3+\log_2C))^{-1}$, then the process must terminate after at most $3+\log_2 C$ steps, as the set under consideration would then have size at least $n-C_1-(3+\log_2C)cn \ge n-\ep n/2$, but the interval containing it would be $[-n/8,n/8]$, a contradiction.

Thus, by applying Theorem \ref{theorem:weakstable}, there exists a $k\in \F_p$ and a partition $V=k\cdot(W_1 \cup W_2) (\mod p)$ such that 

\begin{itemize}
\item $|W_1|\le \ep n$,
\vskip .1in
\item $W_2$ satisfies Property \ref{prop:irreducible}.  
\end{itemize}

In the next step, we pass to consider the concentration probability of $W$ by the following identity

$$\P(S_V=kv) = \P(S_W=v) =\frac{1}{p}\sum_{\xi\in\F_p} e_p(-\frac{v\xi}{2}) \prod_{w\in W} \cos{\frac{\pi w\xi}{p}}.$$

We split the sum into two parts,

$$\Sigma_1:= \frac{1}{p}\sum_{\|\frac{\xi}{p}\|\le \frac{\log^2 n}{n^{3/2}}} e_p(-\frac{v\xi}{2}) \prod_{w\in W} \cos{\frac{\pi w\xi}{p}}$$

$$\Sigma_2:= \frac{1}{p}\sum_{\|\frac{\xi}{p}\|>\frac{\log^2n}{n^{3/2}}} e_p(-\frac{v\xi}{2}) \prod_{w\in
W} \cos{\frac{\pi w\xi}{p}}.$$

We are going to exploit the structure of $W_2$ to show that

\begin{lemma}\label{lemma:S2:step2}

$$\Sigma_2 \le \frac{1}{p}\sum_{\|\frac{\xi}{p}\|\ge \frac{\log^2n}{n^{3/2}}} \prod_{w\in W_2} |\cos{\frac{\pi w\xi}{p}}|\le n^{-3}.$$

\end{lemma}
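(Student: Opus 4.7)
The proof strategy is to mimic Lemma \ref{lemma:S2}, using the two structural properties guaranteed by Property \ref{prop:irreducible}: $W_2$ is a set of at least $(1-\ep)n$ distinct integers in $[-Cn,Cn]$, and it is $c$-irreducible. The estimate \eqref{eqn:fourier3-1} reduces the task to lower-bounding $\sum_{w\in W_2}\|w\xi/p\|^2$, and I split the range of $\xi$ into three pieces:
\[
\text{small: } \tfrac{\log^2 n}{n^{3/2}}\le\|\tfrac{\xi}{p}\|\le \tfrac{1}{4Cn},\quad \text{medium: } \tfrac{1}{4Cn}\le\|\tfrac{\xi}{p}\|\le \tfrac14,\quad \text{large: } \tfrac14\le\|\tfrac{\xi}{p}\|\le\tfrac12.
\]

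In the small range $|w|\|\xi/p\|\le 1/4$, so $\|w\xi/p\|=|w|\|\xi/p\|$ and
$$\sum_{w\in W_2}\|w\xi/p\|^2 = \|\xi/p\|^2\sum_{w\in W_2}w^2.$$
A standard rearrangement bound for $(1-\ep)n$ distinct integers yields $\sum_{w\in W_2} w^2\ge c_0 n^3$, hence the above exceeds $c_0 \log^4 n$, so each $\xi$ contributes $\exp(-\Omega(\log^4 n))$; summing over the $O(p/n)$ such $\xi$ and dividing by $p$ yields a total much smaller than $n^{-3}$.

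In the medium range, I reuse the pair argument from Lemma \ref{lemma:S2}: pick a maximal family of disjoint pairs $(w,w')\in W_2^2$ with $|w-w'|\|\xi/p\|\le 1/2$, apply $\|w\xi/p\|^2+\|w'\xi/p\|^2\ge \tfrac12|w-w'|^2\|\xi/p\|^2$, and invoke Lemma \ref{lemma:3/2:arrange}. The hypothesis of that lemma is satisfied because $W_2$ is dense in the linear-length interval $[-Cn,Cn]$, which yields enough admissible pairs so that the corresponding quantity $M(\xi)$ exceeds $n^{c}$ for some $c>2$, whence the per-$\xi$ bound $\exp(-n^{\Omega(1)})$.

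The main obstacle is the large range, where the Weyl equidistribution available for the consecutive set $V_0$ fails for a generic subset, and where the $c$-irreducibility enters decisively. Take a Dirichlet approximation $\xi/p=a/q+\delta$ with $\gcd(a,q)=1$ and $|\delta|\le 1/(qCn)$: if $q$ is bounded by some constant $q_0=q_0(\ep)$, the set $\{w\in\Z:\|w\xi/p\|\le 1/8\}\cap[-Cn,Cn]$ is contained in a bounded union of residue classes mod $q$, whose intersection with $W_2$ has size at most $c|W_2|$ by irreducibility (since $q\ge 2$); if no such good approximation exists, the three-distance theorem gives near-equidistribution of $\{w\xi/p\}$ as $w$ ranges over $[-Cn,Cn]$, so again only a tiny fraction of $W_2$ satisfies $\|w\xi/p\|\le 1/8$. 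In either case $\sum_{w\in W_2}\|w\xi/p\|^2=\Omega(n)$, yielding $\exp(-\Omega(n))$ per $\xi$ and a negligible total large-range contribution.
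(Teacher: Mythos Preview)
Your small-range argument is fine and matches the paper. The trouble is in the medium and large ranges, where your sketch does not go through as written.

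\textbf{Medium range.} You push this range all the way to $\|\xi/p\|=1/4$, but the pair (or block) mechanism cannot survive that far. At $\|\xi/p\|$ close to $1/4$ the constraint $|w-w'|\,\|\xi/p\|\le 1/2$ forces $|w-w'|\le 2$, and a $c$-irreducible set of $(1-\ep)n$ integers inside $[-Cn,Cn]$ can easily have all consecutive gaps $\ge 3$ once $C$ is moderately large; then there are no admissible pairs and $M(\xi)=0$. The paper accordingly stops the medium range at $\|\xi/p\|\le 1/(64C)$: this guarantees $l=1/(2\|\xi/p\|)\ge 32C$, so every ``long'' block of length $l$ contains at least $\lfloor l/(8C)\rfloor\ge 4$ points of $W_2$, and Lemma \ref{lemma:3/2:arrange} can be applied blockwise to obtain $\sum_{w\in W_2}\|w\xi/p\|^2\gg n$.

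\textbf{Large range.} Your Dirichlet/three-distance argument has two gaps. First, with the stated accuracy $|\delta|\le 1/(qCn)$ one only gets $|w\delta|\le 1/q$ for $|w|\le Cn$; for $q=2$ this is $1/2$, so the perturbation can move $\|w\xi/p\|$ anywhere and the set $\{w:\|w\xi/p\|\le 1/8\}$ is not forced into the residue class $0\pmod q$. Second, and more fundamentally, when $q$ is large the three-distance theorem tells you about the distribution of $\{w\xi/p\}$ over \emph{all} $w\in[-Cn,Cn]$, not over the sparse subset $W_2$ (of density $\approx 1/(2C)$); nothing prevents $W_2$ from sitting entirely inside the $1/4$-fraction of integers with $\|w\xi/p\|\le 1/8$. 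Irreducibility is a statement about residue classes of $W_2$, and you never connect the large-$q$ case to any divisibility condition on $W_2$. The paper's route is different and avoids both issues: assuming for contradiction that $W_2':=\{w\in W_2:\|w\xi/p\|\le 1/\log n\}$ has $|W_2'|>(1-c)|W_2|$, it applies the S\'ark\"ozy-type Lemma (Corollary \ref{lemma:3/2:smallsarkozy}) to $W_2'$ to obtain an arithmetic progression $\{id:|i|\le \gamma n\}\subset kW_2'-kW_2'$ with bounded step $d$. Since every element of this progression is a bounded $\pm$-combination of elements of $W_2'$, one gets $\|id\xi/p\|\le 2k/\log n$ for all $|i|\le \gamma n$, forcing $\|d\xi/p\|\ll 1/(n\log n)$; writing $d\xi=sp+t$ and analysing $w'\xi$ for $w'\in W_2'$ then yields that $d/\gcd(d,s)\ge 2$ divides every element of $W_2'$, contradicting $c$-irreducibility. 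This argument works uniformly for $\|\xi/p\|\in[1/(64C),1/2]$ and is where the irreducibility hypothesis is genuinely consumed.
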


\begin{proof} Making use of \eqref{eqn:fourier3-1}, we split the sum into three subsums, according to
the magnitude of $\|\frac{\xi}{p}\|$.

\noindent {\bf Small $\|\frac{\xi}{p}\|$}: $ \frac{\log^2n}{n^{3/2}} \le \|\frac{\xi}{p}\|\le \frac{1}{2Cn}$. Because $W_2\subset [-Cn,Cn]$, we easily obtain that

$$\sum_{w\in W_2} \|\frac{w \xi}{p}\|^2 = \sum_{w\in W_2} w^2\|\frac{\xi}{p}\|^2 \ge \frac{(1-\ep)^3n^3}{12}\frac{\log^4n}{n^3} \ge 4\log n.$$

Thus the contribution from this part  is bounded from above by $n^{-4}$.

\noindent {\bf Medium $\|\frac{\xi}{p}\|$}: $\frac{1}{2Cn} \le \|\frac{\xi}{p}\| \le \frac{1}{64C}$.
To handle this part, we first  observe the following simple fact.

\begin{lemma}\label{lemma:3/2:arrange}
Let $a\in \F_p$ be arbitrary. Let $\xi\in \F_p$ and
$l>0$ such that $l\|\frac{\xi}{p}\|\le \frac{1}{2}$. Then the following holds for any sequence $0\le i_1<\dots
<i_m\le l$ with $m\ge 4$

\begin{equation}\label{eqn:3/2:arrange}
\sum_{j=1}^m \|\frac{a+i_j\xi}{p}\|^2 \ge \frac{m^3}{48} \|\frac{\xi}{p}\|^2.
\end{equation}
\end{lemma}

The proof of Lemma \ref{lemma:3/2:arrange} is quite simple, we defer it to Appendix \ref{appendix:arrange}. Now we arrange the elements of
$W_2$ as

$$-Cn < w_1<w_2 < \dots < w_{|W_2|} < Cn.$$

Set $l:=\frac{1}{2\|\xi/p\|}$. Thus $32C\le l\le Cn$. Let $i_1$ be the largest index such that $w_{i_1}-w_1 \le l$. We then move on to choose $i_2>i_1$, the largest index such that $w_{i_2}-w_{i_1+1}\le l$, and so on. By so, we create several blocks of elements of $W_2$ with the property that elements of the same block have difference $\le l$.

Since $w_{i_j+1}-w_{i_{j-1}+1}> l$ for all $j$, the number of blocks is less than $\frac{2Cn}{l} +1
$. Next, we call a block {\it short} if it contains no more
than $\frac{l}{8C}$ elements of $W_2$. The total number of elements
of $W_2$ that belong to short blocks is bounded by
$(\frac{2Cn}{l}+1)(\frac{l}{8C}) \le \frac{|W_2|}{2}.$ Hence there are at least $\frac{|W_2|}{2}$
elements that belong to long blocks.

For simplicity, we divide each long block into smaller
blocks of exactly $\lfloor \frac{l}{8C} \rfloor$ elements. The number of such uniform
blocks is then at least $\frac{1}{2}\frac{|W_2|/2}{l/8C} = \frac{2C|W_2|}{l} \ge \frac{Cn}{l}$. We apply \eqref{eqn:3/2:arrange} to each block (with $m=\lfloor l/8C \rfloor$), and then sum over the collection of all blocks using the fact that $l = \frac{1}{2\|\xi/p\|}$,

$$
\sum_{w\in W_2} \|\frac{w \xi}{p}\|^2  \ge \frac{Cn}{l}\frac{m^3}{48} \|\frac{\xi}{p}\|^2 \\
\gg l^2n \|\frac{\xi}{p}\|^2\gg n.$$

Thus the contribution from this part  is bounded from above by $\exp(-\Omega(n))$.

\noindent {\bf Large $\|\frac{\xi}{p}\|$}: $\frac{1}{64C} \le \|\frac{\xi}{p}\|\le \frac{1}{2}$. Let $\delta: = \frac{1}{\log n}$ and

$$W_2':= \{w\in W_2, \|\frac{w \xi}{p}\| \le \delta\}.$$

Assume for the moment that $|W_2'|\le (1-c)|W_2|$ for some positive constant $c=c(\ep)$. Then 

$$\sum_{w\in W_2} \|\frac{w\xi}{p}\|^2 \ge c |W_2|\delta^2 \gg \delta^2 n \gg \frac{n}{\log^2n},$$ 

and so the contribution from this part is bounded from above by $\exp(-\Omega(n\log^{-2} n))$.

Thus, it suffices  to show that $|W_2'|\le (1-c)|W_2|$  for some sufficiently small $c$. Assume otherwise,  we will deduce that there exists a nontrivial $d\in \Z$ that divides all the elements of $W_2'$, which  contradicts the $c$-irreducibility assumption of $W_2$.

To obtain the above contradiction we first use Lemma \ref{lemma:3/2:smallsarkozy} to $W_2$. It is implied that
that $kW_2-kW_2$ contains an arithmetic progression of rank one, $Q=\{id:|i| \le
\gamma n\}$. 

Because $Q\subset [-2kCn,2kCn]$, the step $d$ must be
bounded,

\begin{equation}\label{eqn:3/2:d}
0<d\le \frac{2kC}{\gamma}.
\end{equation}

Let $q$ be an element of $Q$. By definition,  we can write $q$ as a sum and difference of elements of $W_2$,

$$q=w_1+\dots+w_k-w_{k+1} -\dots -w_k.$$ 

Because $\|\frac{w\xi}{p}\| \le \delta$ for all $i$, by the triangle inequality we have  

$$\|\frac{q\xi}{p}\|\le 2k\delta.$$ 

Apply the above estimate for all elements of $Q$, one obtains 

$$\|\frac{id\xi}{p}\|\le 2k\delta \mbox{ for all } |i|\le \gamma n.$$

But as $2k\delta <\frac{1}{4}$, one must have 

$$\|\frac{d\xi}{p}\|\le \frac{2k\delta}{\gamma n}.$$

Next, view $\F_p$ as the interval $[-(p-1)/2,(p-1)/2]$ of $\Z$ and consider $\xi$ as an integer which satisfies $\frac{p}{64C} \le |\xi|\le \frac{p}{2}$. 

By the above bound of $\|\frac{d\xi}{p}\|$, we can write $d\xi = sp+t$, and so 

$$\xi = \frac{sp+t}{d},$$ 

where $s,t$ are integers with $|t|\le \frac{2k\delta p}{\gamma n}$. 

We now collect some useful information for $s$ and $d$. Firstly, because $\xi$ has large absolute value, $|\xi|\ge \frac{p}{64C}$, $s$ cannot be zero.
Secondly, because $|\xi|\le p/2$ and $t$ are small, $d$ does not divide $s$.

Let $w'$ be an arbitrary element of $W_2'$ and consider in $\Z$ the product $w'\xi$,

\begin{equation}\label{eqn:3/2:w}
w'\xi = \frac{w'sp}{d}+\frac{w't}{d} = s'p + \frac{t'p}{d} + \frac{w't}{d},
\end{equation}

\noindent where $w's = s'd+t'$ with $s',t'\in \Z$, and $-\frac{d}{2}\le t'\le \frac{d}{2}.$

Because $|w'|\le Cn$, we have 

$$|\frac{w't}{d}| \le Cn \frac{2k\delta p/\gamma n}{d}  = \frac{2Ck\delta}{\gamma} \frac{p}{d} \le \frac{p}{2d},$$ 

where in the last inequality we used the fact that $\delta$ is small compared to all other quantities.

We next consider two cases according to the value of $t'$. We will eventually run to contradiction in both cases, thus completing the proof of Lemma \ref{lemma:S2:step2}.

\noindent {\bf Case 1 }: $t'\neq 0$.  Because $-\frac{d}{2}\le t' \le \frac{d}{2}$, we have $\frac{p}{d} \le
|\frac{t'p}{d}|\le \frac{p}{2}$, and so 

$$\frac{p}{2d} \le |\frac{t'p}{d} + \frac{wt}{d}| \le \frac{p}{2} + \frac{p}{2d}.$$

Together with \eqref{eqn:3/2:w}, this implies that $\|\frac{w'\xi}{p}\| \ge \frac{1}{2d}$. Hence, by the bound of $d$ from (\ref{eqn:3/2:d}), 

$$\|\frac{w'\xi}{p}\| \ge \frac{\gamma}{4kC} > \delta .$$ 

This inequality violates the definition of $W_2'$, so Case 1 does not hold as long as $\delta$ is sufficiently small.

\noindent {\bf Case 2}: $t'=0$. It then follows that $d$ divides $sw'$
for all $w'\in W_2'$. But recall that $d$ does not divide $s$, so all the element of $W_2'$ are divisible by a nontrivial divisor of $d$. However, this contradicts with the $c$-irreducibility assumption of $W_2$.

\end{proof}

We have shown that the contribution  of $\Sigma_2$ is negligible. Thus, it suffices to justify Theorem \ref{theorem:stable} from the following assumption

\begin{equation}\label{eqn:3/2:Sigma1}
\Sigma_1\ge (1-\ep)\sqrt{\frac{24}{\pi}}n^{-\frac{3}{2}}=(1-\ep) \sqrt{\frac{12}{\pi^2n^3}} \int_{-\infty}^\infty\exp(-\frac{y^2}{2})dy.
\end{equation}

We have

\begin{align*}
\Sigma_1 &= \frac{1}{p}\sum_{\|\frac{\xi}{p}\|\le \frac{\log^2n}{n^{3/2}}} e_p(-\frac{v\xi}{2}) \prod_{w\in W} \cos{\frac{\pi w\xi}{p}}\\
&\le \frac{1}{p}\sum_{\|\frac{\xi}{p}\|\le \frac{\log^2n}{n^{3/2}}} \prod_{w\in W_1} |\cos{\frac{\pi w \xi}{p}}| \prod_{w\in W_2} \cos{\frac{\pi w\xi}{p}}\\
&\le \frac{1}{p}\sum_{\|\frac{\xi}{p}\|\le \frac{\log^2n}{n^{3/2}}} \exp\left(-\sum_{w\in W_1} \|\frac{\xi w}{p}\|^2\right) f_{W_2}(\xi),
\end{align*}

\noindent where

$$f_{W_2}(\xi):=\prod_{w\in W_2} \cos{\frac{\pi w\xi}{p}} =\exp\left(-(1/2+o(1))\sum_{w\in W_2}\frac{\pi^2 w^2 \xi^2}{p^2}\right).$$

Combining this estimate and the lower bound for $\Sigma_1$ we obtain  that

\begin{equation}\label{eqn:3/2:W2}
\sum_{w\in W_2}w^2 \le (1+\ep)\frac{n^3}{12}.
\end{equation}

What remains is to show that a similar sum involving the elements of $W_1$ is negligible. To this end, we first show that the elements of $W_1$  do not have large absolute values (viewing $\F_p$ as the interval  $[-(p-1)/2,(p-1)/2]$ of $\Z$).

Let $C=C(\ep)$ be a number so that

\begin{equation}\label{eqn:C}
\int_{|y|\ge C} \exp(-\frac{y^2}{2}) dy = \ep.
\end{equation}

As $p$ and $n$ are large and $|W_2|\ge (1-\ep)n$, we have

\begin{align*}
\frac{1}{p}\sum_{\frac{C}{n^{3/2}} \le \|\frac{\xi}{p}\|\le \frac{\log^2n}{n^{3/2}}} f_{W_2}(\xi) &\le (\sum_{w\in W_2}\pi^2w^2)^{-1/2}\int_{|y|\ge C}\exp(-\frac{y^2}{2})dy\\
&< (1+4\ep)^{-1}\sqrt{\frac{12}{\pi^2 n^3}} \ep \le \ep \sqrt{\frac{12}{\pi^2n^3}}\int_{-\infty}^{\infty}\exp(-\frac{y^2}{2})dy.
\end{align*}

It thus follows from \eqref{eqn:3/2:Sigma1} that

\begin{equation}\label{eqn:3/2:Sigma1'}
\frac{1}{p}\sum_{\|\frac{\xi}{p}\|\le \frac{C}{n^{3/2}}}\exp\left( -\sum_{w\in W_1} \|\frac{w\xi}{p}\|^2\right) f_{W_2}(\xi)\ge
(1-2\ep)\sqrt{\frac{12}{\pi^2n^3}}\int_{-\infty}^{\infty}\exp(-\frac{y^2}{2})dy.
\end{equation}

Now we are ready to state our estimate on the elements of $W_1$. 

\begin{lemma}\label{lemma:3/2:W1} View $\F_p$ as the interval  $[-(p-1)/2,(p-1)/2]$ of $\Z$. Let $w_0$ be an arbitrary element of $W_1$. Then,

$$|w_0|\le \frac{1}{2C}n^{\frac{3}{2}}.$$
\end{lemma}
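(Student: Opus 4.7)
I would argue by contradiction; suppose some $w_0\in W_1$ satisfies $|w_0|>n^{3/2}/(2C)$. Dropping all factors of the exponential in \eqref{eqn:3/2:Sigma1'} except the one indexed by $w_0$ gives
$$\frac{1}{p}\sum_{\|\xi/p\|\le C/n^{3/2}} \exp(-\|w_0\xi/p\|^2)\,f_{W_2}(\xi) \;\ge\; (1-2\ep)\sqrt{\tfrac{12}{\pi^2 n^3}}\int_{-\infty}^\infty e^{-y^2/2}\,dy.$$
Since $p\gg n$, the sum on the left is close (within an error I can make $o(1)$ relative to the target bound by taking $p$ large) to the Riemann integral $\int_{|x|\le C/n^{3/2}}\exp(-\|w_0 x\|^2)\exp(-\pi^2\sigma^2 x^2/2)\,dx$, where $\sigma^2:=\sum_{w\in W_2}w^2$. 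Combining \eqref{eqn:3/2:W2} with the elementary lower bound $\sigma^2\ge(1-O(\ep))n^3/12$ (since $W_2$ is a set of $\ge(1-\ep)n$ distinct integers, so $\sum w^2$ is at least the sum of squares of the $|W_2|$ integers closest to zero) yields $\sigma^2=(1+O(\ep))n^3/12$.

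Next I substitute $y=\pi\sigma x$ and write $\alpha:=w_0/(\pi\sigma)$. The integral becomes $(\pi\sigma)^{-1}\int_{|y|\le\pi\sigma C/n^{3/2}}\exp(-\|\alpha y\|^2)e^{-y^2/2}\,dy$; extending the $y$-range to all of $\R$ by non-negativity and using $(\pi\sigma)^{-1}=(1+O(\ep))\sqrt{12/(\pi^2 n^3)}$, the inequality above is upgraded to
$$I(\alpha):=\int_{-\infty}^\infty \exp(-\|\alpha y\|^2)\,e^{-y^2/2}\,dy \;\ge\; (1-2\ep-o(1))\sqrt{2\pi},$$
where $|\alpha|=|w_0|/(\pi\sigma)\ge(1-O(\ep))\sqrt{3}/(\pi C)$. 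The task is then to show $I(\alpha)$ is strictly smaller than $\sqrt{2\pi}$ by an amount contradicting this lower bound.

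For this I would use the elementary inequality $1-e^{-s}\ge s/2$ on $[0,1]$ together with $\|\alpha y\|^2\le 1/4$ to obtain
$$\sqrt{2\pi}-I(\alpha)\;\ge\;\tfrac{1}{2}\int_{-\infty}^\infty\|\alpha y\|^2 e^{-y^2/2}\,dy.$$
Restricting the right-hand integrand to $|y|\le 1/(2|\alpha|)$, on which $\|\alpha y\|=|\alpha y|$, the resulting integral equals $\alpha^2 \sqrt{2\pi}(1-o(1))$ because the complement is a Gaussian tail at radius $1/(2|\alpha|)\gg 1$ (valid since $|\alpha|\lesssim 1/C$ is small for $C=C(\ep)$). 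Hence $I(\alpha)\le\sqrt{2\pi}\big(1-\tfrac{\alpha^2}{2}+o(\alpha^2)\big)$. Comparing with the lower bound forces $\alpha^2\le(4+o(1))\ep$; but $\alpha^2\ge 3/(\pi^2C^2)(1-O(\ep))$, and since $C=C(\ep)$ from \eqref{eqn:C} grows only like $\sqrt{2\ln(1/\ep)}$, the inequality $3/(\pi^2 C^2)>4\ep$ is guaranteed once $\ep\le \ep_0$ is sufficiently small, yielding the desired contradiction.

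The main obstacle is propagating all the error terms carefully so that the small gap $\alpha^2/2\sim 1/C^2$ is not swamped: the Riemann-sum error in passing from the $\F_p$-sum to the continuous integral, the Taylor approximation of $\cos$ hidden in $f_{W_2}$, and the Gaussian-tail truncation must each be controlled to order $o(1/C^2)$ rather than simply $o(1)$. Since $p$ may be chosen arbitrarily large relative to $n$ in the preliminary embedding, and $C$ depends only on $\ep$, this is achievable but does require somewhat careful bookkeeping.
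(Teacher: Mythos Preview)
Your approach is analytically cleaner than the paper's, but there is a circularity in the middle that you should patch. You write that the restriction to $|y|\le 1/(2|\alpha|)$ misses only ``a Gaussian tail at radius $1/(2|\alpha|)\gg 1$ (valid since $|\alpha|\lesssim 1/C$)''. But the contradiction hypothesis gives only the \emph{lower} bound $|\alpha|\gtrsim 1/C$; nothing you have established so far bounds $|w_0|$, and hence $|\alpha|$, from above. If, say, $|\alpha|$ is of order $1$ or larger, then $1/(2|\alpha|)$ is bounded and the truncated integral $\int_{|y|\le 1/(2|\alpha|)}y^2 e^{-y^2/2}\,dy$ is no longer $(1-o(1))\sqrt{2\pi}$; your chain $I(\alpha)\le\sqrt{2\pi}(1-\alpha^2/2+o(\alpha^2))$ then breaks down. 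The repair is easy: dispose of the regime $|\alpha|\ge \alpha_0$ (any fixed constant) first, by observing that the set $\{y:\|\alpha y\|\ge 1/4\}$ carries a fixed positive Gaussian mass there, so $I(\alpha)\le (1-c_0)\sqrt{2\pi}$ for an absolute $c_0>0$, already contradicting $I(\alpha)\ge(1-O(\ep))\sqrt{2\pi}$ for small $\ep$. With $|\alpha|<\alpha_0$ in hand, your argument goes through as written.

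For comparison, the paper avoids this case split by a counting argument that is uniform in $|w_0|$: once $|w_0|>n^{3/2}/(2C)$, at least $(3/4-o(1))$ of the $\xi$ in the window $\|\xi/p\|\le C n^{-3/2}$ satisfy $\|w_0\xi/p\|\ge 1/8$ (since $w_0\xi/p$ sweeps an interval of length $>1$). On that set the factor $\exp(-\|w_0\xi/p\|^2)$ is at most $e^{-1/64}$, and because $f_{W_2}$ is decreasing in $|\xi|$, the loss can be lower-bounded by the contribution of the outer shell $C/2\le |y|\le C$, whose Gaussian mass is $\ge \ep^{2/3}$ by the definition of $C$. This gives a deficit of order $\ep^{2/3}\gg\ep$ without ever needing $|\alpha|$ small. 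Your route, once patched, gives a comparable deficit of order $1/C^2\sim 1/\log(1/\ep)\gg\ep$, and has the virtue of being a single clean integral inequality; the paper's route is more robust but less transparent.
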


\begin{proof}(of Lemma \ref{lemma:3/2:W1})  Assume for contradiction that $|w_0| >  \frac{n^{\frac{3}{2}}}{2C}$. Then the number of $\xi\in[-\frac{Cp}{n^{3/2}},\frac{Cp}{n^{3/2}}]$ satisfying $\frac{1}{8} \le \|\frac{w_0\xi}{p}\|$ is at least $(3/4-o(1)) \frac{2Cp}{n^{3/2}}$. Denote this set by $I$, then for $\xi\in I$ we have

$$\sum_{w\in W}\|\frac{w\xi}{p}\|^2 \ge \|\frac{w_0\xi}{p}\|^2\ge \frac{1}{64}.$$

Notice that the function $f_{W_2}(\xi)$ is decreasing in $|\xi|$ (in the range $-\frac{Cp}{n^{3/2} }\le \xi \le \frac{Cp}{n^{3/2}}$) and $|I|\ge Cp/n^{3/2}$, we have

\begin{align*}
\Sigma_1 &\le \frac{1}{p}\sum_{\xi\in I}\exp(-\|\frac{w\xi}{p}\|^2) f_{W_2}(\xi) + \frac{1}{p}\sum_{\|\frac{\xi}{p}\|\le \frac{C}{n^{3/2}}, \xi\not\in I} \exp(-\|\frac{w\xi}{p}\|^2) f_{W_2}(\xi)\\
&\le \frac{1}{p}\sum_{\xi\in I}\exp(-\frac{1}{64})f_{W_2}(\xi)+ \frac{1}{p}\sum_{\|\frac{\xi}{p}\|\le \frac{C}{n^{3/2}}, \xi\not\in I}f_{W_2}(\xi)\\
&\le \frac{1}{p}\sum_{\|\frac{\xi}{p}\|\le \frac{C}{n^{3/2}}}f_{W_2}(\xi) - (1-\exp(-\frac{1}{64}))\frac{1}{p}\sum_{\frac{C}{2n^{3/2}}\le \|\frac{\xi}{p}\|\le \frac{C}{n^{3/2}}}f_{W_2}(\xi).
\end{align*}

As $p$ and $n$ are large, the above sum is bounded by

\begin{align*}
\Sigma_1 &\le (1+4\ep)\sqrt{\frac{12}{\pi^2 n^3}}\left( \int_{|y|\le C}\exp(-\frac{y^2}{2}) - (1-\exp(-\frac{1}{64}))\int_{C/2 \le |y|\le C} \exp(-\frac{y^2}{2})dy \right)\\
&\le (1+4\ep)\sqrt{\frac{12}{\pi^2 n^3}} \left( \sqrt{2\pi} -(1-\exp(-\frac{1}{64}))\ep^{2/3} \right)
\end{align*}

\noindent where in the last estimate we used the definition \eqref{eqn:C} of $C$ and the fact that 

$$\int_{C/2\le |y|\le C} \exp(-\frac{y^2}{2}) dy \ge \ep^{2/3}.$$

By choosing $\ep$ sufficiently small, the upper bound for $\sigma_1$ above is
smaller than the lower bound in \eqref{eqn:3/2:Sigma1}, a contradiction.
This  concludes  the proof of Lemma \ref{lemma:3/2:W1}.
\end{proof}

To continue, set 

$$\sigma_1 := \sum_{w\in W_1}w^2.$$ 

By Lemma \ref{lemma:3/2:W1}, one has $|w|\le \frac{1}{2C}n^{3/2}$ for every $w\in W_1$. Thus if $\|\frac{\xi}{p}\|  \le Cn^{-3/2}$, then

$$\sum_{w\in W_1}\|\frac{w\xi}{p}\|^2 = \sum_{w\in W_1}|w|^2\|\frac{\xi}{p}\|^2 = \sigma_1 \|\frac{\xi}{p}\|^2.$$

This observation enables us to apply the proof of Lemma \ref{lemma:3/2:W1}, with $w_0^2$ replaced by $\sigma_1$, to obtain a similar bound for $\sigma_1$

\begin{equation}\label{eqn:sigma1}
\sigma_1 =\sum_{w\in W_1} w^2 \le \frac{n^3}{4C^2}.
\end{equation}

Putting \eqref{eqn:3/2:W2} and \eqref{eqn:sigma1} together, we obtain

\begin{align}\label{eqn:final}
\sum_{w\in W} w^2 = \sum_{w\in W_1}w^2 + \sum_{w\in W_2} w^2 &\le (1+\ep+\frac{3}{C^2})\frac{n^3}{12}\nonumber \\
&\le (1+\ep')\frac{n^3}{12},
\end{align}

where $\ep'\rightarrow 0$ as $\ep \rightarrow 0$.

Let us stop to record what we have achieved so far. For any sufficiently large $p$, there exist a nonzero number $k\in \F_p$ and a set $W$ of integers which satisfies \eqref{eqn:final} and such that $V=k\cdot W (\mod p)$.

On the other hand, note that the number of sets $W$ which satisfy \eqref{eqn:final} is bounded by a function of $n$ and $\ep'$. Hence, there exists a common set $W=\{w_1,\dots,w_n\}$ for which $v_i=kw_i(\mod p), v_i\in V$ for infinitely many primes $p$. 

As a consequence, for any pair $(i,j)$ one has $p|k(v_iw_j-v_jw_i)$, or equivalently

$$p|v_iw_j-v_jw_i.$$

Because this is true for infinitely many $p$, one has 

$$v_iw_j=v_jw_i (\mbox{in } \Z).$$ 

As this identity holds for every pair $(i,j)$, and as $W$ is irreducible (because of either \eqref{eqn:final} or Property \ref{prop:irreducible}), there exists a nonzero integer $l$ such that 

$$V=l\cdot W.$$

In other words, $l$ divides every element of $V$ and 

$$\sum_{v\in V} (\frac{v}{l})^2 = \sum_{w\in W}w^2 \le (1+\ep') \frac{n^3}{12},$$

completing the proof of Theorem \ref{theorem:stable}.

{\it Acknowledgment.} The author would like to thank Van Vu for very helpful discussions and remarks. He is grateful to the referee for many of his thoughtful  comments  which led to a substantial improvement in the presentation of the paper.

\appendix

\section{Proof of Lemma \ref{lemma:3/2:arrange}}\label{appendix:arrange}

Without loss of generality, we assume that $m$ is even. For each $j\le m/2$, by Cauchy-Schwarz
inequality and the triangle inequality, we have

\begin{align*}
\|a_{i_j}\xi\|^2+\|a+i_{m-j}\xi\|^2 &\ge \frac{1}{2} \|(a+i_{m-j}\xi)-(a+i_j\xi)\|^2\\ 
&\ge \frac{1}{2}\|(i_{m-i}-i_j)\xi\|^2\\
&\ge \frac{1}{2}(i_{m-j}-i_j)^2\|\xi\|^2.
\end{align*}

Summing over $0\le j\le m/2$, we obtain 

\begin{align*}
\sum_{j=1}^m \|a_{i_j}\xi\|^2 &\ge \frac{1}{2} \sum_{1\le j\le m/2} (i_{m-j}-i_j)^2\|\xi\|^2\\
&\ge \frac{1}{2} \sum_{1\le j\le m/2} j^2\|\xi\|^2\\
&\ge \frac{m^3}{48}\|\xi\|^2,
\end{align*}

where in the second inequality we used the fact that $i_{m-j}-i_j$ strictly decreases
in $\Z$.

\section{Tools from Additive Combinatorics}\label{apendix:AC}

\subsection{Freiman homomorphism}\label{subsection:FH}
We now introduce the concept of a Freiman homomorphism, that
allows us to transfer an additive problem in one group $G$ to another group $G'$ in a way
which is more flexible than the usual algebraic notion of group homomorphism.

\begin{definition}[Freiman homomorphisms] Let $k\ge 1$, and let $X$, $Y$ be additive
sets of groups $G$ and $H$ respectively. A Freiman homomorphism of order
$k$ from $X$ to $Y$ is a map $\phi:X \rightarrow Y$ with the property that

$$x_1 + \dots +x_k = x_1' +\dots + x_k' \Longrightarrow  \phi(x_1)+\dots+ \phi(x_k ) = \phi(x_1') + \dots+ \phi(x_k')$$

\noindent for all $x_1,\dots,x_k; x_1',\dots, x_k'$. If in addition there is an inverse map $\phi^{-1}$ from  $Y$ to $X$
which is a Freiman homomorphism of order $k$, then we say that $\phi$ is a Freiman isomorphism of order $k$, and that $X$ and $Y$ are
Freiman isomorphic of order $k$.
\end{definition}

Clearly Freiman homomorphisms preserve the property of being a progression. We now mention a result that shows torsion-free additive groups are not richer than the integers, for the purposes of understanding sums and differences of finite sets (\cite[Chapter 5]{TVbook}).

\begin{theorem} Let $X$ be a finite subset of a torsion-free additive group $G$. Then for
any integer $k$, there is a Freiman isomorphism $\phi$ : $X\rightarrow \phi(X)$ of order $k$ to some
finite subset $\phi(X)$ of the integers $\Z$. The same is true if we replace $\Z$ by $\F_p$, if $p$ is sufficiently large depending on $X$.
\end{theorem}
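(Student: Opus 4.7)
The plan is to reduce to the case $X\subset \Z^d$ for some $d$, and then apply the classical base-$N$ embedding trick. Because $G$ is torsion-free abelian and $X$ is finite, the subgroup of $G$ generated by $X$ is finitely generated and torsion-free, hence isomorphic to $\Z^d$ for some $d\le |X|$ by the structure theorem for finitely generated abelian groups. Any group isomorphism is automatically a Freiman isomorphism of every order, so I may assume $X\subset \Z^d$ from the outset.

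Next, I would set $M := \max\{|x_i|: x=(x_1,\dots,x_d)\in X\}$, pick any integer $N>2kM$, and define
\[
\phi:\Z^d\to\Z, \qquad \phi(x_1,\dots,x_d):=\sum_{i=1}^d x_i N^{i-1}.
\]
Since $\phi$ is a genuine group homomorphism, the direction $\sum_j x_j=\sum_j x_j' \Rightarrow \sum_j \phi(x_j)=\sum_j \phi(x_j')$ is automatic. For the converse, both $\sum_j x_j$ and $\sum_j x_j'$ lie in the coordinate box $B:=\{y\in\Z^d: |y_i|\le kM\text{ for all }i\}$, and $\phi$ is injective on $B$: the choice $N>2kM$ is exactly the condition that every $y\in B$ admits a unique representation in base $N$ with digits in $(-N/2,N/2]$. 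Hence $\sum_j \phi(x_j)=\sum_j \phi(x_j')$ forces $\sum_j x_j=\sum_j x_j'$, confirming that $\phi|_X$ is a Freiman isomorphism of order $k$ onto its integer image.

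For the $\F_p$ version I would post-compose with reduction modulo $p$. Every element of $\phi(X)$ has absolute value less than $MN^d$, hence every $k$-fold sum of such elements has absolute value less than $kMN^d$. Consequently, any prime $p>2kMN^d$ makes the reduction $\Z\to \F_p$ injective on the relevant $k$-fold sumset, so the composition $X\to\F_p$ remains a Freiman isomorphism of order $k$. This matches the theorem's wording that $p$ may be taken ``sufficiently large depending on $X$''.

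There is no genuinely hard step here; the content of the result is really the structure theorem for finitely generated torsion-free abelian groups combined with the base-$N$ trick. The only thing that requires care is the bookkeeping: choosing $N$ large relative to $kM$ to kill carrying in the base-$N$ expansion, and then choosing $p$ large relative to $kMN^d$ to kill modular collisions. Neither step is delicate beyond keeping track of constants.
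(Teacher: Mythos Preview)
Your argument is correct and is precisely the standard proof: reduce to $\Z^d$ via the structure theorem for finitely generated torsion-free abelian groups, then use the base-$N$ map $\phi(x_1,\dots,x_d)=\sum_i x_i N^{i-1}$ with $N>2kM$, and finally reduce modulo a prime $p$ large enough to avoid collisions on the relevant $k$-fold sums. The bookkeeping you give is accurate.

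Note, however, that the paper does not supply its own proof of this statement. It is quoted in the appendix as a known result from \cite[Chapter~5]{TVbook}, and the proof there is exactly the one you have written. So there is no alternative argument in the paper to compare against; your proposal simply fills in what the paper takes as background.
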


An identical proof also yields  a somewhat stronger result below.

\begin{theorem}\label{theorem:Freimaniso}
Let $X$ be a finite subset of a torsion-free additive group $G$. Then for
any integer $k$, there is a map $\phi$ : $X\rightarrow \phi(X)$ to some
finite subset $\phi(X)$ of the integers $\Z$ such that

$$x_1+\dots +x_i = x_1'+\dots + x_j' \Leftrightarrow \phi(x_1)+\dots +\phi(x_i) = \phi(x_1')+\dots \phi(x_j') $$

\noindent for all $i,j\le k$. The same is true if we replace $\Z$ by $\F_p$, if $p$ is sufficiently large depending on $A$.
\end{theorem}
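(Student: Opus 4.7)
The plan is to adapt the standard proof of the Freiman isomorphism theorem (that torsion-free groups look like $\Z$ on finite sets) to the slightly stronger ``unbalanced'' version where the number of terms on each side may differ, but both are bounded by $k$.

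First I would reduce to $\Z^d$. Because $G$ is torsion-free, the subgroup generated by the finite set $X$ is a finitely generated torsion-free abelian group, hence isomorphic to $\Z^d$ for some $d \le |X|$. So without loss of generality $X \subset \Z^d$. The claim then becomes: find a map $\phi\colon X \to \Z$ such that the biconditional
\begin{equation*}
x_1+\dots+x_i = x_1'+\dots+x_j' \iff \phi(x_1)+\dots+\phi(x_i) = \phi(x_1')+\dots+\phi(x_j')
\end{equation*}
holds for all $i,j \le k$.

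Next I would take $\phi$ to be the restriction to $X$ of a $\Z$-linear functional $L_c\colon \Z^d \to \Z$, $L_c(v) = c_1 v_1 + \dots + c_d v_d$, with $c \in \Z^d$ chosen carefully. The forward implication is automatic by linearity. For the reverse implication, introduce the finite set
\begin{equation*}
\CD := \bigl\{\, \textstyle\sum_{a=1}^{i} x_{\alpha_a} - \sum_{b=1}^{j} x_{\beta_b} \;:\; i,j \le k,\; x_{\alpha_a}, x_{\beta_b} \in X\,\bigr\} \subset \Z^d,
\end{equation*}
which has cardinality at most $(|X|+1)^{2k}$. What we need is $L_c(v) \ne 0$ for every nonzero $v \in \CD$. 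For each such $v$, the condition $L_c(v)=0$ cuts out a proper hyperplane in $\Q^d$, so the union of these finitely many hyperplanes is a proper subset of $\Z^d$, and a generic (e.g.\ randomly or lexicographically chosen) $c \in \Z^d$ avoids all of them simultaneously. This yields the desired $\phi = L_c|_X$.

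For the $\F_p$ version I would simply post-compose with the reduction map $\Z \to \F_p$. Once $c$ has been fixed, the finite set $\{\,L_c(v) : v \in \CD \setminus \{0\}\,\}$ consists of finitely many nonzero integers; choosing any prime $p$ larger than their maximum absolute value ensures that none of them vanish modulo $p$, which preserves the biconditional. The only step that requires any thought is the genericity argument for $c$, but this is standard and the finiteness of $\CD$ (which uses both that $X$ is finite and that $i, j \le k$) makes it immediate; no serious obstacle is expected.
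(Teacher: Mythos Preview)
Your argument is correct and is exactly the standard proof the paper has in mind. The paper does not actually supply a proof of this theorem: it quotes the usual Freiman isomorphism result from \cite[Chapter~5]{TVbook} and then simply remarks that ``an identical proof also yields'' the unbalanced version stated here. What you wrote is precisely that identical proof---reduce to $\Z^d$ via the structure theorem for finitely generated torsion-free abelian groups, then project to $\Z$ by a generic linear functional chosen to avoid the finitely many bad hyperplanes coming from the set $\CD$---so there is nothing to add.
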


\subsection{Freiman's inverse theorem}\label{subsection:FI}

A subset $Q$ of an abelian group is a \emph{Generalized arithmetic progression (GAP) of rank $r$} if it can be expressed as in the form
$$Q= \{a_0+ x_1a_1 + \dots +x_r a_r| M_i \le x_i \le M_i' \hbox{ for all } 1 \leq i \leq r\}$$
for some $a_0,\ldots,a_r$ and $M_1,\dots, M_r,M_1',\dots,M_r'$.

It is convenient to think of $Q$ as the image of an integer box $B:= \{(x_1, \dots, x_r) \in \Z^d| M_i \le m_i
\le M_i' \} $ under the linear map $$\Phi: (x_1,\dots, x_d) \mapsto a_0+ x_1a_1 + \dots + x_r a_r. $$
The numbers $a_i$ are the \emph{generators } of $P$, the numbers $M_i',M_i$ are the \emph{dimensions} of $P$, and $\Vol(Q) := |B|$ is the \emph{volume} of $B$. We say that $Q$ is \emph{proper} if this map is one to one, or equivalently if $|Q| = \Vol(Q)$.  For non-proper GAPs, we of course have $|Q| < \Vol(Q)$.
If $-M_i=M_i'$ for all $i\ge 1$ and $a_0=0$, we say that $Q$ is {\it symmetric}.

It is easy to verify that if $X$ is a dense subset of a GAP of bounded rank, then $|X+X|$ has size not much bigger than $X$. The celebrated Freiman's inverse theorem says the converse. This theorem comes in a number of variants; we give one of them below.

\begin{theorem}[Freiman's inverse theorem] \cite[Theorem 5.35]{TVbook}\label{theorem:Freiman'}
Let $\gamma\ge 2$ be a given positive number. Let $X$ be a subset of
a torsion-free group such that $|X+X| \le \gamma |X|$. Then for any
$0< \ep$ there exists a proper symmetric arithmetic progression $Q$
of rank at most $\lfloor \log_2\gamma +\ep \rfloor$ and size
$|Q|=\Theta_{\ep,\gamma}(|X|)$ such that $|X\cap
Q|=\Theta_{\ep,\gamma}(|X|)$.
\end{theorem}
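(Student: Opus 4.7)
The plan is to follow the classical three-pillar architecture of Freiman's theorem: first, upgrade the doubling hypothesis to polynomial control on all iterated sumsets; second, locate a proper generalized arithmetic progression inside a bounded sumset of $X$ by passing to a finite model and running Bogolyubov's Fourier argument; third, push the structure onto $X$ itself via Ruzsa's covering lemma.

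First, from $|X+X|\le\gamma|X|$ I would derive via the Pl\"unnecke--Ruzsa inequalities the polynomial bound $|kX-\ell X|\le\gamma^{k+\ell}|X|$ for every $k,\ell\ge 1$. Second, I would apply Ruzsa's modelling lemma --- itself built on the Freiman isomorphisms recalled in Section \ref{subsection:FH} --- to find a Freiman isomorphism of sufficiently large order between a dense subset $X'\subset X$ and a subset $\tilde X$ of a finite cyclic group $\Z/N\Z$ with $N=O_\gamma(|X|)$. This reduction is essential because the core structural step is purely Fourier-analytic and requires a finite ambient group.

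The analytic heart is Bogolyubov's lemma in $\Z/N\Z$. Plancherel applied to the convolution $1_{\tilde X}*1_{-\tilde X}$ shows that the large Fourier spectrum $\Gamma=\{\xi:|\widehat{1_{\tilde X}}(\xi)|\ge\alpha|\tilde X|\}$ is small, and a standard computation yields a Bohr set $B(\Gamma,\rho)\subset 2\tilde X-2\tilde X$. To secure the sharp rank bound $\lfloor\log_2\gamma+\ep\rfloor$ I would invoke Chang's spectral lemma, which places $\Gamma$ inside the $\Z$-span of a dissociated set of size essentially $\log_2\gamma$, and then optimize the threshold $\alpha$ against the doubling. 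Next, Minkowski's second theorem from the geometry of numbers carves out of $B(\Gamma,\rho)$ a proper symmetric GAP $\tilde Q$ of that same rank and of volume $\Theta_{\ep,\gamma}(|\tilde X|)$. Lifting $\tilde Q$ back through the inverse Freiman isomorphism produces a proper symmetric GAP $Q$ in the ambient torsion-free group with $Q\subset 2X-2X$ and $|Q|=\Theta_{\ep,\gamma}(|X|)$.

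Finally, to upgrade $Q\subset 2X-2X$ into the large intersection $|X\cap Q|=\Theta_{\ep,\gamma}(|X|)$ demanded by the statement, I would apply Ruzsa's covering lemma: since $|X+Q|\le|3X-2X|\le\gamma^{5}|X|=O_\gamma(|Q|)$, the set $X$ is covered by $O_\gamma(1)$ translates of $Q-Q$, so some translate $x_0+(Q-Q)$ meets $X$ in $\Omega_\gamma(|X|)$ elements; enlarging $Q$ by the bounded factor $2$ (which preserves the rank) and re-centering at $x_0$ yields the conclusion. The main obstacle is precisely the sharp rank bound $\lfloor\log_2\gamma+\ep\rfloor$ rather than a cruder $O_\gamma(1)$ one: the elementary lower bound $r\ge\log_2\gamma-O(1)$ (since a rank-$r$ GAP of volume $V$ has doubling at most $2^r$) shows this is essentially best possible, and matching it forces the use of Chang's lemma in its sharpest form together with a delicate density-threshold optimization, with the $\ep$ slack absorbing the rounding losses coming from both Chang's theorem and Minkowski's theorem.
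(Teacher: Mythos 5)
The first thing to note is that the paper offers no proof of Theorem \ref{theorem:Freiman'} at all: it is quoted from Tao and Vu \cite[Theorem 5.35]{TVbook} and used as a black box in the proof of Theorem \ref{theorem:weakstable}. So your proposal can only be measured against the textbook proof, and against that benchmark it has a genuine gap exactly where the quoted statement has its teeth, namely the rank bound $\lfloor \log_2\gamma+\ep\rfloor$.

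The chain you describe (Pl\"unnecke--Ruzsa, Ruzsa modelling, Bogolyubov with Chang's spectral lemma, Minkowski's second theorem, Ruzsa covering) is the standard route to \emph{a} Freiman-type theorem, and it does produce a proper GAP of size $\Theta_{\ep,\gamma}(|X|)$ meeting $X$ in $\Theta_{\ep,\gamma}(|X|)$ elements --- but with rank $C\log_2\gamma$ for a large absolute constant $C$, not $\lfloor\log_2\gamma+\ep\rfloor$. Concretely: Ruzsa's modelling lemma forces the cyclic model to have order at least $|kX-kX|$ for the relevant Freiman order $k$, which Pl\"unnecke--Ruzsa only bounds by $\gamma^{O(1)}|X|$, so the density $\delta$ of the model set is $\gamma^{-O(1)}$ rather than $\gamma^{-1}$; Chang's lemma bounds the dimension of the $\alpha$-spectrum by $O(\alpha^{-2}\log(1/\delta))$ with an absolute constant exceeding $1$; and the Bohr-set-to-GAP step via Minkowski keeps that inflated number of frequencies as the rank. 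No ``density-threshold optimization'' removes these constant factors; note that for $\gamma=2$ the claimed rank is $1$, i.e.\ an honest arithmetic progression capturing a positive proportion of $X$, which the Fourier route cannot deliver. The sharp rank in the cited theorem requires an additional ingredient, of Bilu/geometry-of-numbers type, that your sketch omits entirely: starting from any crude bounded-rank structure one performs a descent in the rank, discarding directions in which the set is thin and playing the hypothesis $|X+X|\le\gamma|X|$ against the fact that a proper symmetric box-like progression of rank $d$ with large dimensions doubles by a factor close to $2^d$, until the rank drops to $\lfloor\log_2\gamma+\ep\rfloor$. Two smaller points: your final covering step outputs $x_0+(Q-Q)$, which is no longer symmetric (and properness of $Q-Q=2Q$ is not automatic, so an embed-into-proper-GAP lemma is needed), whereas the symmetry of the progression is precisely what Section \ref{section:step1} exploits when applying the theorem to the symmetric set $X=2^{i_0}S_m$; so the re-centering must be avoided, e.g.\ by using the symmetry of $X$ itself rather than an arbitrary translate.
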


In our setting, after passing to $\F_p$ by a Freiman-isomorphism, in order to apply Theorem \ref{theorem:Freiman'} we need a to pass back to torsion-free groups. The following result by Green and Ruzsa allows us to do so.

\begin{theorem}\cite[Theorem 1.3]{GR}\label{theorem:rectifiable}
Let $X\subset \F_p$ be a set with $|2X| \le \gamma|X|$ and
$|X|=o_{k,\gamma}(p)$. Then X is Freiman isomorphism of order $k$ to
a subset of a torsion-free group.
\end{theorem}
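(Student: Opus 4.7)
The plan is to build the Freiman $k$-isomorphism in the explicit form $\phi(x)=\widetilde{\lambda x}$, where $\lambda\in\F_p^{\ast}$ is a dilation to be chosen and $\widetilde{\,\cdot\,}\colon\F_p\to\Z\cap(-p/2,p/2]$ is the canonical lift. For any $x_1,\ldots,x_k,y_1,\ldots,y_k\in X$, one has the integer identity
\[
\sum_{i=1}^{k}\phi(x_i)-\sum_{j=1}^{k}\phi(y_j)\equiv\lambda\Bigl(\sum_i x_i-\sum_j y_j\Bigr)\pmod{p}.
\]
Hence if every $|\phi(x)|<p/(2k)$ then both sides have absolute value less than $p$, and the integer congruence becomes an equality in $\Z$ iff the $\F_p$ right-hand side vanishes. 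Since the reverse direction is automatic from reduction mod $p$, the map $\phi$ is then already a Freiman $k$-isomorphism onto its image in $\Z$, which is torsion-free. The problem thus reduces to finding $\lambda$ with $\|\lambda x/p\|<1/(2k)$ for every $x\in X$.

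To locate such a $\lambda$ I would first impose additive structure on $X$. Ruzsa's $\F_p$-analogue of Freiman's theorem, applied to the doubling hypothesis $|2X|\le\gamma|X|$, produces a proper symmetric generalized arithmetic progression
\[
P=\{n_1 a_1+\cdots+n_r a_r:|n_i|\le N_i\}\subset\F_p
\]
of bounded rank $r=r(\gamma)$ and volume $|P|\le C(\gamma)|X|$, and an element $a_0\in\F_p$ with $X\subset a_0+P$. Translating $X$ in $\F_p$ by $-a_0$ only changes the prospective $\phi(X)$ by a $\Z$-translation and preserves the Freiman property, so we may assume $X\subset P$. By the triangle inequality on $\R/\Z$, whenever the individual quantities $N_i\|\lambda a_i/p\|$ sum to less than $1/2$ we have $\|\lambda x/p\|\le\sum_i N_i\|\lambda a_i/p\|$ for every $x=\sum n_i a_i\in P$, so it suffices to locate $\lambda$ satisfying $\|\lambda a_i/p\|<1/(8krN_i)$ for each $i=1,\ldots,r$.

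This last step is a standard pigeonhole on the torus $(\R/\Z)^r$. Considering the $p$ points $\lambda\cdot(a_1/p,\ldots,a_r/p)\bmod\Z^r$ for $\lambda=0,1,\ldots,p-1$ against a partition of the torus into boxes of sidelengths $1/(8krN_i)$, one gets two indices $\lambda_1\ne\lambda_2$ in a common box as soon as
\[
\prod_{i=1}^{r}(8krN_i)=(8kr)^{r}\prod_i N_i\le (8kr)^{r}\,C(\gamma)\,|X|<p,
\]
which holds for large $p$ because $|X|=o_{k,\gamma}(p)$. The difference $\lambda:=\lambda_1-\lambda_2\in\F_p^{\ast}$ then satisfies the required bounds, and $\phi=\widetilde{\lambda\cdot}$ is the desired embedding into $\Z$.

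The main obstacle is the first step: invoking Ruzsa's $\F_p$-Freiman theorem with explicit control on the rank $r(\gamma)$ and the volume constant $C(\gamma)$ is the deep input, resting on the Bogolyubov-Ruzsa/model-set machinery. Everything else --- the covering GAP, the triangle inequality on $\R/\Z$, and the torus pigeonhole --- is elementary bookkeeping that only needs $|X|=o_{k,\gamma}(p)$ to push through.
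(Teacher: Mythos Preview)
The paper does not prove this theorem; it is quoted from \cite{GR} (as Theorem~1.3 there) and used as a black box in the appendix. There is therefore no in-paper proof to compare against.

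On its own merits your sketch is a correct outline and is in fact close to the Green--Ruzsa argument: cover $X$ by a bounded-rank proper GAP using Freiman-type structure theory in $\F_p$, then pigeonhole on the torus $(\R/\Z)^r$ to find a dilation $\lambda$ for which the canonical lift $\widetilde{\lambda\,\cdot\,}$ carries the whole GAP, hence $X$, into $(-p/(2k),p/(2k))\cap\Z$; the lift is then automatically a Freiman $k$-isomorphism.

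Two small remarks. First, your sentence ``translating $X$ in $\F_p$ by $-a_0$ only changes the prospective $\phi(X)$ by a $\Z$-translation'' is not literally correct, since the lift $\widetilde{\lambda\,\cdot\,}$ does not commute with $\F_p$-translation in general; the reduction is nevertheless valid because translation in $\F_p$ is itself a Freiman isomorphism of every order, so it suffices to rectify $X-a_0$. Second, be mindful of a potential circularity: one standard route to an ``$\F_p$-Freiman theorem'' is precisely to rectify into $\Z$ first and then apply Freiman there, which would make your first step assume the conclusion. Green and Ruzsa avoid this by producing the covering GAP directly inside $\F_p$ via Bogolyubov--Chang (a large Bohr set in $2X-2X$), geometry of numbers to extract a GAP, and Ruzsa covering --- exactly the ``Bogolyubov--Ruzsa/model-set machinery'' you flag in your last paragraph. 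With that understood, your proposal is sound.
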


\subsection{S\'ark\"ozy-type theorem in GAP} Assume that $X$ is a dense subset of a GAP $Q$, then the iterated sumsets  $kX$ contains a structure similar to $Q$ (see \cite[Lemma 4.4, Lemma 5.5]{SzV}, \cite[Lemma B3]{T-sol}).

\begin{lemma}[S\'ark\"ozy-type theorem in progressions]\label{lemma:Sarkozy} Let
$Q$ be a proper GAP in a torsion-free group of rank $r$. Let $X\subset Q$ be a subset such that
$|X|\ge \delta |Q|$ for some $0 < \delta < 1$. Then there exists a positive integer $1\le k\ll_{\delta,r} 1$
such that $kX$ contains a GAP $Q'$ of rank $r$ and size $\Theta_{\delta,r}(|Q|)$. Furthermore, the generators of $Q'$ are bounded multiples of the generators of $Q$. If $Q$ and $X$ are symmetric, then $Q'$ can be chosen to be symmetric.
\end{lemma}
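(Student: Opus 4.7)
The plan is to prove Lemma~\ref{lemma:Sarkozy} by induction on the rank $r$, after first reducing the problem to a box in $\Z^r$. Because $Q$ is a proper GAP in a torsion-free group, the natural parametrization $\Phi:B\to Q$ with $B=\prod_{i=1}^r[M_i,M_i']\subset\Z^r$ identifies $X$ with a subset $\tilde X\subset B$ of the same density $\delta$. After working with a sufficiently high-order Freiman-isomorphic version of this identification (or first passing to a proper-of-order-$k$ sub-GAP in the usual way), any rank-$r$ GAP inside $k\tilde X$ whose generators are bounded integer multiples of the standard basis $e_1,\dots,e_r$ translates, via $\Phi$, into one inside $kX$ whose generators are bounded multiples of $a_1,\dots,a_r$. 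So it suffices to work in $\Z^r$ with $Q=B$.

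The base case $r=1$ is a density form of S\'ark\"ozy's theorem: if $X$ is $\delta$-dense in an interval $I$ of length $N$, then for some $k_0=O_\delta(1)$ the iterated sumset $k_0X$ contains an arithmetic progression of length $\Omega_\delta(N)$ with bounded common difference. This follows from $|X+X|\le|I+I|\le(2/\delta)|X|$ combined with Freiman's theorem (Theorem~\ref{theorem:Freiman'}) and a Bogolyubov-Ruzsa-style argument. For the inductive step, decompose $B=I'\times I_r$ with $I'=\prod_{i<r}I_i$ and, for $t\in I_r$, let the fiber be $X_t=\{\Bx'\in I':(\Bx',t)\in X\}$. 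Averaging produces $|T|\ge(\delta/2)|I_r|$ for $T:=\{t\in I_r:|X_t|\ge(\delta/2)|I'|\}$. Applying the base case to $T$ yields $k_1=O_\delta(1)$ together with an AP in $k_1T$ of length $\gg_\delta|I_r|$ and bounded step $d_r$; the inductive hypothesis applied to each $X_t$ yields $k_2=O_{\delta,r}(1)$ such that $k_2X_t$ contains a rank-$(r-1)$ GAP $P_t=c_t+Q_t^{\circ}$ whose step-generators $b_1^{(t)}e_1,\dots,b_{r-1}^{(t)}e_{r-1}$ are bounded multiples of $e_1,\dots,e_{r-1}$.

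The hard part is aligning the fiber GAPs $P_t$ across $t\in T$. Since the tuples $(b_1^{(t)},\dots,b_{r-1}^{(t)};m_1^{(t)},\dots,m_{r-1}^{(t)})$ of bounded integer data take only $O_{\delta,r}(1)$ possible values, a first pigeonhole restricts to $T_1\subset T$ of positive relative density on which the step-GAP $Q^{\circ}:=Q_t^{\circ}$ is constant. The offsets $c_t\in\Z^{r-1}$, reduced modulo the sublattice spanned by the step vectors $b_ie_i$ (a sublattice of bounded index $\prod_i b_i$), take only $O_{\delta,r}(1)$ residues, so a second pigeonhole extracts $T_2\subset T_1$ of positive relative density on which the $c_t$ all lie in a common coset. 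Replacing the $P_t$ by pairwise differences $P_t-P_{t'}\subset 2k_2X$ then collapses the varying-offset dependence into a common translate of $2Q^{\circ}$. Re-running the base case on $T_2$ gives an AP $\{t_0+jd_r:|j|\le m_r\}\subset k_1T_2$ with $m_r\gg_{\delta,r}|I_r|$, and combining produces inside $(k_1+2k_2)X$ a rank-$r$ product GAP of size $\Theta_{\delta,r}(|B|)$ whose generators are bounded multiples of the standard basis. For the symmetric conclusion, any constructed $P'\subset kX$ can be replaced by its symmetrization $P'-P'\subset 2kX$, which is a symmetric rank-$r$ GAP of comparable size with generators of the same bounded-multiple type.
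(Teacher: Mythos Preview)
The paper does not actually prove Lemma~\ref{lemma:Sarkozy}; it is listed in the appendix as a known tool, with pointers to \cite[Lemmas~4.4 and~5.5]{SzV} and \cite[Lemma~B3]{T-sol}, and no argument is given. So there is no ``paper's own proof'' to compare against, and your sketch has to be judged on its own merits.

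Your overall plan (pull back to a box $B\subset\Z^r$ via properness, then induct on $r$ by fibering over the last coordinate) is a reasonable strategy, but the alignment step does not work as written. First, a small slip: the side-lengths $m_i^{(t)}$ are of order $|I_i|$, not $O_{\delta,r}(1)$, so you cannot pigeonhole on the full tuple $(b_i^{(t)};m_i^{(t)})$; you should pigeonhole only on the bounded step-multiples $b_i^{(t)}$ and then pass to the common minimum side-lengths. The substantive gap is the claim $P_t-P_{t'}\subset 2k_2X$. Writing $X_t$ as $X_t\times\{t\}\subset X$, one has $P_t\subset k_2X$, so $P_t-P_{t'}\subset k_2X-k_2X$, which is not $2k_2X$ for non-symmetric $X$. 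More importantly, even granting the subtraction, the difference $P_t-P_{t_0}=(c_t-c_{t_0})+2Q^{\circ}$ is a \emph{translate} of $2Q^{\circ}$ by a lattice vector $c_t-c_{t_0}\in L$ whose coordinates can be as large as $O(k_2|I_i|)$, i.e.\ larger than the side-lengths of $Q^{\circ}$; so these translates need not share any common sub-GAP, and you have not produced a single rank-$(r-1)$ GAP sitting over every point of the base AP. Consequently the final ``combining'' sentence does not yield a rank-$r$ GAP inside $(k_1+2k_2)X$. In the symmetric case the subtraction issue disappears, but the drifting-center problem remains. The cited proofs avoid this by a different mechanism (a covering/cube-type argument rather than a fiberwise induction with post-hoc alignment); if you want to push your inductive approach through, you need an additional idea that pins down the centers $c_t$ up to $o(|I'|)$, not merely modulo a bounded-index lattice.
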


An easy corollary when $Q$ has rank 1 follows.

\begin{corollary}\label{lemma:3/2:smallsarkozy}
Assume that $X$ is a subset of $[-Cn,Cn]$ of size $n$ in $\Z$. Then
there exists an integer $k=k(C)$ and a positive number $\gamma
=\gamma(C)>0$ such that $kX-kX$ contains a symmetric arithmetic
progression of rank 1 and length $2\gamma n+1$.
\end{corollary}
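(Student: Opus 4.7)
The statement is a direct corollary of Lemma \ref{lemma:Sarkozy} once we symmetrize appropriately. The idea is to reduce to the symmetric case so that we can extract a \emph{symmetric} arithmetic progression in the end, and to exploit the identity $kX-kX = k(X-X)$ to set up the application of Lemma \ref{lemma:Sarkozy}.

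First I would set $Y := X - X$. Then $Y$ is symmetric (it contains $0$ and is invariant under $y \mapsto -y$), it satisfies $Y \subset [-2Cn, 2Cn]$, and $|Y| \ge |X| = n$ (since $X - x_0 \subset Y$ for any fixed $x_0 \in X$). Thus, viewing $Q := [-2Cn, 2Cn] \cap \Z$ as a symmetric proper GAP of rank $1$, we have $Y \subset Q$ with density
$$\frac{|Y|}{|Q|} \ge \frac{n}{4Cn+1} \ge \frac{1}{5C} =: \delta,$$
a positive constant depending only on $C$.

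Next I would apply Lemma \ref{lemma:Sarkozy} to $Y \subset Q$ with this $\delta$ and $r = 1$. Since both $Q$ and $Y$ are symmetric, the lemma produces an integer $k_0 = k_0(C) = O_\delta(1)$ and a \emph{symmetric} GAP $Q'$ of rank $1$ with $|Q'| = \Theta_C(|Q|) = \Theta_C(n)$ such that $Q' \subset k_0 Y$. A symmetric rank-one GAP in $\Z$ is exactly a symmetric arithmetic progression $\{id : |i| \le \gamma n\}$ for some $\gamma = \gamma(C) > 0$, giving length $2\gamma n + 1$.

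Finally, I would use the identity
$$k_0 Y = k_0 (X - X) = k_0 X - k_0 X,$$
which is immediate from grouping positive and negative terms in any sum of $k_0$ elements from $X - X$. Taking $k := k_0$ gives $Q' \subset kX - kX$, completing the proof. There is no real obstacle here; the only subtlety is remembering to symmetrize via $Y = X - X$ before invoking Lemma \ref{lemma:Sarkozy}, so that the symmetric clause applies and we recover a symmetric (rather than merely translated) AP.
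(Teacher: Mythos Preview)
Your proof is correct and is precisely the natural route the paper has in mind: the corollary is stated without proof as an immediate consequence of Lemma \ref{lemma:Sarkozy}, and your symmetrization via $Y=X-X$ together with the identity $k_0(X-X)=k_0X-k_0X$ is exactly how one extracts the symmetric rank-one progression inside $kX-kX$.
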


\end{document}